\numberwithin{equation}{section}
\newtheorem{thm}{Theorem}[section]
\newtheorem{theorem}[thm]{Theorem}
\newtheorem{proposition}[thm]{Proposition}
\newtheorem{lemma}[thm]{Lemma}
\newtheorem{corollary}[thm]{Corollary}
\theoremstyle{definition}
\newtheorem{definition}[thm]{Definition}
\newtheorem{remark}[thm]{Remark}
\newtheorem{theoremalpha}{Theorem} 
\theoremstyle{remark}
\newcommand{\ZZ}{\mathbb{Z}}
\newcommand{\RR}{\mathbb{R}}
\newcommand{\Ker}{\mathrm{Ker}}
\newcommand{\Cay}{\mathrm{Cay}}
\newcommand{\bC}{\check{\rm C}}
\newcommand{\tv}{\tilde{v}}
\newcommand{\tw}{\tilde{w}}
\newcommand{\N}{\mathcal{N}}
\newcommand{\tB}{\widetilde{B}}
\newcommand{\Bor}{\mathrm{Bor}}
\newcommand{\tBor}{\widetilde{\mathrm{Bor}}}
\newcommand{\tgamma}{\tilde{\gamma}}
\newcommand{\pr}{\mathrm{pr}}
\title[Higher-dimensional generalization of Youngs' theorem]{Higher-dimensional generalization of Youngs' theorem and circular colorings}
\author[K. Enami]{Kengo Enami}
\address{College of Liberal Arts and Science, Kitasato University, 1-15-1 Kitasato, Minami-ku, Sagamihara, Kanagawa 252-0373, Japan
}
\email{enamikengo@gmail.com}
\author[T. Matsushita]{Takahiro Matsushita}
\address{Department of Mathematical Sciences, Faculty of Science, Shinshu University, Matsumoto, Nagano 390-8621, Japan}
\email{matsushita@shinshu-u.ac.jp}
\subjclass[2020]{Primary 05C15
; Secondary 05C10
}
\keywords{Youngs' theorem; quadrangulation; circular coloring}
\begin{document}

\baselineskip.525cm

\maketitle

\begin{abstract}
In 1996, Youngs proved that any quadrangulation of the real projective plane is not $3$-chromatic. This result has been extended in various directions over the years, including to other non-orientable closed surfaces, higher-dimensional analogues of quadrangulations
and circular colorings. In this paper, we provide a generalization which yields some of these extensions of Youngs' theorem, by using a method developed in topological combinatorics.
\end{abstract}

\section{Introduction} \label{section introduction}

\subsection{Youngs' theorem} \label{subsection background}
Quadrangulations on surfaces have been extensively studied in topological graph theory. The purpose of this paper is to provide a generalization of a theorem by Youngs (Theorem~\ref{theorem:youngs}) concerning the graph coloring problem of quadrangulations on the real projective plane.

In this paper, every graph is assumed to be simple. A \emph{surface} means a compact connected $2$-dimensional manifold, possibly with boundary. An embedded graph $G$ on a surface $S$ is said to be \emph{cellular} if every face is homeomorphic to a $2$-cell. A cellular embedding is said to be \emph{even-sided} if every face is bounded by a closed walk of even length. In particular, if every face is bounded by a closed walk of length $4$, the embedding is called a \emph{quadrangulation}.

Graph coloring problems are one of the central topics in topological graph theory. For plane graphs, the Four Color Theorem~\cite{AH1,AH2,AH3} serves as a cornerstone result. For graphs embedded on non-spherical closed surfaces, the Map Color Theorem~ \cite{Heawood} is a natural extension of the four color theorem, which states that every graph embedded on a non-spherical closed surface $S$ with Euler characteristic $\varepsilon(S)$ has chromatic number
\[ \chi(G) \leq \left\lfloor\frac{7 + \sqrt{49 - 24\varepsilon(S)}}{2} \right\rfloor.\]
This bound was later shown to be best possible for all closed surfaces except for the Klein bottle, through the work of Ringel and Youngs~\cite{RY}.
These foundational results have inspired a vast body of subsequent research. In addition to these developments, coloring problems for graphs with even-sided embeddings have also received attention.

It is easy to see that a graph cellularly embedded on the sphere is $2$-colorable if and only if the embedding is even-sided.
On the other hand, Hutchinson~\cite{Hutchinson1} showed that every graph $G$ that admits an even-sided embedding on a non-spherical closed surface $S$ has chromatic number
\[ \chi(G)\leq \dfrac{5+\sqrt{25-16\varepsilon(S)}}{2}.\]
Later, Liu et al.~\cite{Liuetal} proved that this bound is best possible for all closed surfaces except for the Klein bottle and the double torus.
In the case of a quadrangulation $G$ on the real projective plane, the above inequality implies that $\chi(G) \le 4$. On the other hand, Youngs proved the following landmark result.

\begin{theorem}[Youngs~\cite{Youngs}]\label{theorem:youngs}
Any quadrangulation of the real projective plane $\RR P^2$ is not $3$-chromatic.
\end{theorem}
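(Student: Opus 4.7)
The plan is to prove the contrapositive: any proper $3$-coloring of $G$ secretly descends to a proper $2$-coloring, so $\chi(G)$ cannot equal $3$.

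Suppose $c\colon V(G)\to \ZZ/3\ZZ$ is a proper $3$-coloring. After fixing an arbitrary orientation on each edge, I define a $\ZZ$-valued cellular $1$-cochain $\omega$ on the CW structure of $\RR P^2$ induced by $G$: for an edge $e$ oriented from $u$ to $v$, let $\omega(e)\in\{-1,+1\}$ be the unique representative of $c(v)-c(u)\pmod 3$ in $\{-1,+1\}$. This is well defined precisely because $c$ is proper.

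The key combinatorial step, and the main obstacle of the proof, is verifying that $\omega$ is a cocycle. For a quadrilateral face $F$ with boundary walk $v_1 v_2 v_3 v_4$, the value $(d\omega)(F)$ is a signed sum of four terms each equal to $\pm 1$, so it lies in $\{-4,-2,0,2,4\}$. Reduced modulo $3$, the sum telescopes to $\sum_{i=1}^4 \bigl(c(v_{i+1})-c(v_i)\bigr) = 0$, so $(d\omega)(F)$ is a multiple of $3$ lying in $\{-4,-2,0,2,4\}$; the only such value is $0$. This is precisely where the length-$4$ hypothesis is essential: the parity of $4$ combined with the mod-$3$ condition forces vanishing in $\ZZ$, which would fail for even-sided faces of length $\geq 6$.

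Once $\omega$ is a $\ZZ$-valued cocycle, I invoke the standard computation $H^1(\RR P^2;\ZZ)=0$: the cocycle $\omega$ must be a coboundary, so there exists $\tilde c\colon V(G)\to\ZZ$ with $\tilde c(v)-\tilde c(u)=\omega(e)\in\{-1,+1\}$ along every oriented edge $e\colon u\to v$. In particular, $\tilde c$ takes opposite parities on adjacent vertices, and hence $\tilde c \bmod 2$ is a proper $2$-coloring of $G$, contradicting $\chi(G)=3$. If one prefers to avoid cohomology, the same conclusion can be reached by integrating $\omega$ along a spanning tree of $G$: consistency around closed walks reduces to checking $\omega$ on a generator of $H_1(\RR P^2;\ZZ)=\ZZ/2$, and for any essential loop $\gamma$ the relation that $2\gamma$ bounds a $2$-chain together with the face-cocycle identity gives $2\,\omega(\gamma)=0$ in $\ZZ$, hence $\omega(\gamma)=0$.
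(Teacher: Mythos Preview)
Your argument is correct; it is essentially the classical cochain proof of Youngs' theorem. The paper does not prove Theorem~\ref{theorem:youngs} directly but deduces it from Theorem~\ref{theorem chromatic}, whose method is different: given a $k$-coloring $f\colon X^1\to K_k$ with no rainbow square, each composite $C_4\to X^1\xrightarrow{f}K_k$ is null-homotopic, so $f$ extends to a continuous map $F\colon X\to K_k$, and one reaches a contradiction because the torsion class $[\gamma]_{\ZZ}$ must map to $0$ in the free abelian group $H_1(K_k;\ZZ)$ while $[f\circ\gamma]_{\ZZ_2}\ne 0$. Your proof is more elementary---a direct calculation in the cellular cochain complex, resting on the numerical coincidence $\{-4,-2,0,2,4\}\cap 3\ZZ=\{0\}$---and it outputs an explicit $2$-coloring, but it is tied to $3$ colors, to $4$-sided faces, and to the specific vanishing $H^1(\RR P^2;\ZZ)=0$. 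The paper's extension-of-maps argument works for every $k$ (yielding rainbow squares, not just a $2$-coloring), for any CW complex with quadrangulated $2$-skeleton whose $H_1$ carries the relevant torsion class, and it is what underlies the further generalizations to circular colorings in Section~\ref{section Proof}.
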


Hence the chromatic number of a quadrangulation of the real projective plane is $2$ or $4$.
This phenomenon is remarkable since $3$-chromatic quadrangulations exist for all other non-spherical closed surfaces.
Moreover, Hutchinson~\cite{Hutchinson2} showed that every graph with an even-sided embedding on any orientable closed surface in which every non-contractible cycle is sufficiently long is $3$-colorable.
Youngs’ theorem and its extension to non-orientable closed surfaces mentioned below (Theorem~\ref{theorem:non-ori}) indicate that this does not hold for non-orientable closed surfaces.

\subsection{Various generalizations and refinements} \label{subsection generalizations}
There are various extensions of Youngs' theorem, and our main theorem (Theorem~\ref{theorem main}) is a generalization of Youngs' theorem, which yields some of them.

First, we discuss the extension of Youngs' theorem to non-orientable closed surfaces, which was independently established by Archdeacon et al.~\cite{AHNNO} and Mohar and Seymour~\cite{MS}.

\begin{theorem}[Archdeacon et al.~\cite{AHNNO} and Mohar and Seymour~\cite{MS}] \label{theorem:non-ori}
Let $N$ be a closed non-orientable connected surface, and $G$ a quadrangulation of $N$.
Suppose that $G$ has an odd cycle $C$ such that the surface obtained by cutting $N$ along $C$ is orientable.
Then $G$ is not $3$-chromatic.
\end{theorem}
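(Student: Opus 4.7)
The plan is to suppose for contradiction that $c \colon V(G) \to \mathbb{Z}/3$ is a proper $3$-coloring, reinterpret $c$ as a signed edge labeling, and derive a contradiction by summing these labels over the faces of a closed orientable surface obtained from $N$ by cutting along $C$ and capping with disks.

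For each oriented edge $e = (u,v)$, let $\ell(e) \in \{+1,-1\}$ be the integer with $\ell(e) \equiv c(v)-c(u) \pmod 3$; then $\ell(\bar e) = -\ell(e)$, and for any oriented closed walk $\gamma$, the sum $\sigma(\gamma) = \sum_{e \in \gamma} \ell(e)$ satisfies $\sigma(\gamma) \equiv 0 \pmod 3$ by telescoping. Two immediate consequences: a $4$-face $F$ has $\sigma(\partial F) \in \{-4,-2,0,2,4\}$ and divisible by $3$, so $\sigma(\partial F) = 0$; and since $C$ has odd length, $\sigma(C)$ is odd and divisible by $3$, hence $\sigma(C) \neq 0$.

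Now cut $N$ along $C$ to obtain the orientable surface $N'$ supplied by the hypothesis, cap each boundary component of $N'$ with a disk to form a closed orientable surface $N''$, and let $G''$ be the induced cellular embedding on $N''$. Its faces are the original $4$-faces together with one or two new disk faces, according as $C$ is one-sided or two-sided; the lifted $3$-coloring is still proper, so $\ell$ and $\sigma$ are defined on $G''$. Fix an orientation of $N''$ and orient every face by the induced boundary orientation. Since each edge of $G''$ lies in exactly two faces with opposite induced orientations, $\sum_F \sigma(\partial F) = 0$; combined with $\sigma(\partial F) = 0$ on every $4$-face, this gives $\sum_D \sigma(\partial D) = 0$, where the sum runs over the new disk faces.

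It remains to identify this sum with $2\sigma(C)$. If $C$ is one-sided, the unique new disk face has boundary a length-$2|C|$ cycle that double-covers $C$ in a single direction (the standard picture of cutting a M\"obius band along its core), so $\sigma(\partial D) = 2\sigma(C)$. If $C$ is two-sided, the two disk boundaries are parallel copies of $C$, and the key claim, where the non-orientability of $N$ is used essentially, is that the orientations induced on them from $N''$ both trace $C$ in the same direction. In local coordinates $(\theta,s)$ on an annular tubular neighborhood of $C$, the orientation of $N'$ restricts to the two sides of $C$ as $\epsilon_+\, d\theta \wedge ds$ and $\epsilon_-\, d\theta \wedge ds$; if $\epsilon_+ = \epsilon_-$ these would patch to a global orientation of $N$, contradicting non-orientability, so $\epsilon_+ = -\epsilon_-$, and a direct computation of the outward-normal-first convention then yields the parallel induced boundary orientations. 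Hence $2\sigma(C) = 0$, contradicting $\sigma(C) \neq 0$. The main technical obstacle is precisely this orientation analysis in the two-sided case, where the hypothesis on $C$ and the non-orientability of $N$ conspire to make the two boundary cycles contribute with the same sign; the remainder of the argument is routine cocycle bookkeeping.
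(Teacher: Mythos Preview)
Your argument is correct and essentially reproduces the original surface-theoretic proof of Archdeacon et al.\ (this is exactly what the paper alludes to in Remark~\ref{remark archdeaconetal}), but it is \emph{not} the route the paper takes. The paper deduces Theorem~\ref{theorem:non-ori} from its general Theorem~\ref{theorem chromatic}: Proposition~\ref{proposition cut} shows that the cutting hypothesis forces $2[C]_{\ZZ}=0$ in $H_1(N;\ZZ)$, and then one argues abstractly that a $3$-coloring $f\colon X^1\to K_3$ with no rainbow square (automatic when only three colors are used) extends over the $2$-cells, hence to a continuous map $F\colon X\to K_3$; since $H_1(K_3;\ZZ)$ is free, $F_*([C]_{\ZZ})=0$, while Proposition~\ref{proposition odd cycle} says $[f\circ C]_{\ZZ_2}\ne 0$, a contradiction. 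Your $\pm1$ labeling is the cochain-level avatar of this map to $K_3$, and your ``$\sum_F\sigma(\partial F)=0$ on the capped orientable surface'' is the hands-on version of the torsion computation $2[C]_{\ZZ}=0$. What your approach buys is a completely elementary proof requiring only surface topology; what the paper's approach buys is that the same argument works verbatim for any CW complex with quadrangulated $2$-skeleton (Theorem~\ref{theorem main 2}), yields the rainbow-square strengthening for $k\ge 4$ colors, and feeds directly into the circular-chromatic refinement.

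One small point worth making explicit in your write-up: in the two-sided case your orientation argument (``$\epsilon_+=\epsilon_-$ would patch to orient $N$'') tacitly uses that $N'$ is connected. This is true, because a separating two-sided $C$ with orientable complement would force $N$ itself to be orientable (orient the two pieces compatibly across $C$); but as written the reader has to supply that step.
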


Next we discuss a higher-dimensional generalization of Youngs' theorem.
Hachimori, Nakamoto and Ozeki~\cite{HNO} considered $d$-dimensional quadrangulations of the real $d$-dimensional projective space $\RR P^d$ for any $d\geq 2$ as follows:
Set $I = [0,1]$ and consider the standard CW structure of $I^n$.
A regular CW complex $C$ is called a \emph{cubical complex} if each characteristic map $\varphi \colon I^n \to C$ of an $n$-cell of $C$ is a cellular embedding, that is, $\varphi(I^n)$ is a subcomplex of $C$ and $\varphi$ is an isomorphism between $I^n$ and $\varphi(I^n)$.
If a cubical complex $C$ is homeomorphic to a topological space $X$, then the 1-skeleton of $C$ is called a \emph{quadrangulation} of $X$.

Hachimori, Nakamoto and Ozeki~\cite{HNO} proved that for all $d \geq 2$, any $d$-dimensional quadrangulation of the real projective space $\RR P^d$ satisfying a certain geometric condition is $2$- or $4$-chromatic. They further asked whether this geometric condition is necessary; that is, whether every non-bipartite $d$-dimensional quadrangulation of $\RR P^d$ is necessarily $4$-chromatic. Recently, Kaiser et al.~\cite{Kaiseretal} proved a negative answer to this question by constructing $3$-dimensional quadrangulations of $\RR P^3$ with arbitrarily large chromatic numbers. Nevertheless, they also established the following theorem, which shows that Youngs’ result does extend to higher dimensions under certain conditions.

\begin{theorem}[{Kaiser et al. \cite{Kaiseretal}}]\label{theorem:high-dim}
For all $d\geq 2$, every $d$-dimensional quadrangulation of the real projective space $\RR P^d$ is not $3$-chromatic.
\end{theorem}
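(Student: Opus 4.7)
The plan is to deduce the theorem from the vanishing of $H^1(\RR P^d;\ZZ)$ for $d\geq 2$, by converting a hypothetical 3-coloring into an integer-valued cellular 1-cocycle whose coboundary potential, reduced mod $2$, becomes a 2-coloring.

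Suppose toward a contradiction that the 1-skeleton $G$ of a cubical complex $C$ with $|C|\cong \RR P^d$ admits a proper 3-coloring $c\colon V(G)\to \ZZ/3$. I would define a 1-cochain $\omega\in C^1(C;\ZZ)$ on each oriented edge $(u,v)$ by
\[ \omega(u,v) = \begin{cases} +1 & \text{if } c(v)-c(u)\equiv 1\pmod 3,\\ -1 & \text{if } c(v)-c(u)\equiv 2\pmod 3,\end{cases} \]
which is antisymmetric and well-defined because $c$ is proper. Each 2-cell of $C$ is a combinatorial 4-cycle $v_1v_2v_3v_4$, and on this face the coboundary
\[ (\delta\omega)(v_1v_2v_3v_4) = \omega(v_1,v_2)+\omega(v_2,v_3)+\omega(v_3,v_4)+\omega(v_4,v_1) \]
reduces modulo $3$ to the telescoping sum $(c(v_2)-c(v_1))+(c(v_3)-c(v_2))+(c(v_4)-c(v_3))+(c(v_1)-c(v_4))=0$. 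But as a sum of four elements of $\{\pm 1\}$, $(\delta\omega)(v_1v_2v_3v_4)$ lies in $\{-4,-2,0,2,4\}$, and among these only $0$ is divisible by $3$. Hence $\delta\omega$ vanishes on every 2-cell and $\omega\in Z^1(C;\ZZ)$.

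Since $|C|\simeq\RR P^d$ and $H^1(\RR P^d;\ZZ)=0$ for $d\geq 2$, and cellular cohomology of the regular CW complex $C$ agrees with singular cohomology of $|C|$, the cocycle $\omega$ is a coboundary: $\omega=\delta\beta$ for some $\beta\in C^0(C;\ZZ)$. For every edge $\{u,v\}$ of $G$ this forces $|\beta(v)-\beta(u)|=1$, so $\beta\bmod 2$ is a proper 2-coloring of $G$, contradicting $\chi(G)=3$.

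The main conceptual obstacle is designing the right $\omega$: it must encode the 3-coloring but take integer values small enough that the congruence $\delta\omega\equiv 0\pmod 3$ upgrades to $\delta\omega=0$ in $\ZZ$. Once this is in place, everything else is formal. The same argument in fact proves that the 1-skeleton of any cubical complex with $H^1(|C|;\ZZ)=0$ is not 3-chromatic, so Theorem \ref{theorem:high-dim} is really a consequence of a single cohomological vanishing.
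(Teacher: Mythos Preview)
Your proof is correct. The argument is the classical ``nowhere-zero tension'' approach: a proper $3$-coloring yields a nowhere-zero $\ZZ/3$-valued cellular $1$-cocycle, which on $4$-cycles lifts to a $\ZZ$-valued cocycle because a sum of four elements of $\{\pm 1\}$ that vanishes mod $3$ must vanish in $\ZZ$; the vanishing of $H^1(\RR P^d;\ZZ)$ then produces a $\ZZ$-valued potential whose mod-$2$ reduction is a $2$-coloring.

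The paper takes a different but dual route. Rather than cohomology and potentials, it works in homology: a $3$-coloring $f\colon X^1\to K_3$ with no rainbow square extends, cell by cell, to a continuous map $F\colon X\to K_3$, and then one exploits that $H_1(K_3;\ZZ)\cong\ZZ$ is torsion-free while the image $F_*([\gamma]_\ZZ)$ of an odd closed walk is simultaneously torsion (by hypothesis) and nonzero (since $f\circ\gamma$ is an odd walk in $K_3$). Your cocycle $\omega$ is, in effect, the pullback $f^*(\alpha)$ of the generator $\alpha\in H^1(K_3;\ZZ)$, so the two arguments are Poincar\'e-dual shadows of each other.

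What each buys: your argument is more elementary and self-contained, needing no extension lemmas or homotopy theory, and it makes the combinatorial content transparent. The paper's argument, on the other hand, yields two things yours does not: it works under the weaker hypothesis that merely \emph{some} odd closed walk represents a torsion class in $H_1(X;\ZZ)$ (rather than requiring $H^1(X;\ZZ)=0$, i.e.\ all of $H_1$ torsion), and it produces a rainbow square whenever $\chi(X^1)\ge 4$ (Theorem~\ref{theorem chromatic}), recovering and generalizing the Archdeacon--Hutchinson--Nakamoto--Negami--Ota refinement.
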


Another type of higher-dimensional generalization of Youngs' theorem was discussed in Kaiser and Stehl\'{i}k~\cite{KS}.

Finally, we discuss a refinement of Youngs' theorem from the perspective of circular chromatic numbers. The circular chromatic number $\chi_c(G)$ is a real-valued invariant of a graph $G$ satisfying $\lceil \chi_c(G) \rceil = \chi(G)$. In this sense, $\chi_c(G)$ refines the chromatic number, and has been extensively studied. We refer the reader to \cite{PW, Zhu, Zhu2} for surveys on this concept.
The precise definition of the circular chromatic number will be given in Subsection~\ref{subsection Borsuk graph}.
DeVos et al.~\cite{DGMVZ} proved the following refinement of Youngs' theorem.

\begin{theorem}[DeVos et al.~\cite{DGMVZ}]\label{theorem:circular}
Let $G$ be a non-bipartite graph with an even-sided embedding on $\RR P^2$.
Let $k$ be a positive integer such that every face of $G$ is a $2i$-gon with $i \le k$.
Then
\[ \chi_c(G) \ge 2 + \frac{2}{k-1}.\]
\end{theorem}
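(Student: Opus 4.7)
The plan is to argue by contradiction. I assume a circular $r$-coloring $c\colon V(G)\to S^1_r:=\mathbb{R}/r\mathbb{Z}$ exists with $r<2+\frac{2}{k-1}=\frac{2k}{k-1}$, and derive a contradiction. After a small generic perturbation (keeping $r<2k/(k-1)$ strict) no edge has antipodal endpoints, so the shorter arc between the colors at each edge is well defined; routing each edge along this shorter arc extends $c$ to a continuous $f\colon G\to S^1_r$. Fixing an orientation of the edges, I let $\alpha(e)\in[-r/2,-1]\cup[1,r/2]$ be the signed length of the shorter arc from tail to head, so $|\alpha(e)|<r/2$ strictly. For each face $F$, a $2i$-cycle with $i\le k$, I define the winding number
\[ w_F := \tfrac{1}{r}\sum_{e\in\partial F}\alpha(e) \in\mathbb{Z}. \]
The triangle inequality together with $|\alpha(e)|<r/2$ gives $|w_F|\cdot r<2i\cdot r/2=ir$, forcing $|w_F|\le i-1\le k-1$.

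Next I would pass to the orientable double cover $\pi\colon S^2\to\mathbb{R}P^2$ with antipodal involution $\sigma$, obtaining the $\sigma$-invariant pull-back embedding $\widetilde G=\pi^{-1}(G)$ (still even-sided with face sizes $\le 2k$) and the $\sigma$-invariant lift $\tilde f=f\circ\pi$. A standard parity count shows any null-homotopic cycle in $G$ has even length: such a cycle bounds a disc tiled by the even-sided faces of $G$, and summing face-boundary lengths modulo $2$ leaves only the boundary. Consequently every odd cycle $C$ of $G$, which exists by non-bipartiteness, is non-contractible in $\mathbb{R}P^2$ and lifts to an arc $\widetilde C\subset\widetilde G$ from some vertex $\tilde v$ to its antipode $\sigma\tilde v$; the concatenation $\gamma:=\widetilde C\cdot\sigma\widetilde C$ is a closed curve on $S^2$ bounding a disc $D$ tiled by faces of $\widetilde G$.

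Because $\tilde f(\sigma\tilde v)=\tilde f(\tilde v)$, the integer $m:=\tfrac{1}{r}\sum_{e\in\widetilde C}\alpha(e)$ is well defined; by $\sigma$-invariance of $\tilde f$ the winding of $\tilde f$ around $\gamma$ equals $2m$. Expressing this winding as a sum over the faces of $D$ gives $2m=\sum_{F\subset D}\epsilon_F w_F$ for suitable signs $\epsilon_F\in\{\pm 1\}$, whence
\[ 2|m|\le\sum_{F\subset D}|w_F|\le(k-1)\cdot\#\{F\subset D\}. \]
Taking $C$ to be a shortest odd cycle of $G$, of length $L=2\ell+1$, an elementary case analysis on the numbers of positive versus negative $\alpha$-values along $\widetilde C$ shows that $m=0$ would require $r\ge 2(\ell+1)/\ell=2+2/\ell$; for $\ell\le k-1$ (equivalently $L\le 2k-1$) this is incompatible with $r<2+2/(k-1)$, so $|m|\ge 1$.

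\textbf{Main obstacle.} The difficulty is converting these two inequalities into an outright contradiction: I need a sufficiently sharp upper bound on $\#\{F\subset D\}$ to clash with $|m|\ge 1$, and the shortness of $C$ only limits $D$ weakly. In the quadrangulation case $k=2$ a Youngs-type parity argument on the faces of $D$ is expected to deliver this. For general $k$, one likely has to use the refined face bound $|w_F|\le i_F-1$ varying face by face, together with an extremal argument relating the odd girth of $G$ to the minimal face-filling of $\gamma$, in the spirit of the proofs of Youngs' theorem and its subsequent non-orientable and higher-dimensional extensions.
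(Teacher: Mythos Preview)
Your proposal is incomplete, as you yourself acknowledge in the final paragraph, and the obstacle you identify is genuine and not easily removable within your framework.

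There are two concrete gaps. First, your claim that $|m|\ge 1$ is established only when the odd girth satisfies $\ell\le k-1$; but the odd girth of a non-bipartite even-sided embedding on $\RR P^2$ can be arbitrarily large (already for quadrangulations, $k=2$, the shortest non-contractible cycle can be made as long as one wishes by refining the mesh), so the case $\ell\ge k$ is simply missing. Second, and more seriously, even granting $|m|\ge 1$, the inequality $2|m|\le (k-1)\cdot\#\{F\subset D\}$ yields only $\#\{F\subset D\}\ge 2/(k-1)$, i.e.\ that $D$ contains at least one face. Nothing in your construction bounds the number of faces in $D$ from above: choosing $C$ shortest controls the length of $\partial D$, not the area of $D$. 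The refined bound $|w_F|\le i_F-1$ you mention sums to $2|m|\le\sum_{F\subset D}(i_F-1)$, which is still unbounded, and no ``extremal argument relating the odd girth to the minimal face-filling'' is in sight.

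The paper takes a completely different route that avoids face counting. It regards the even-sided embedding as a CW complex $X$ with even $2$-skeleton and $X^1=G$; since $H_1(\RR P^2;\ZZ)\cong\ZZ/2$, every odd closed walk $\gamma$ represents a torsion class, so the paper's main theorem applies directly. The substance of that theorem is a computation of the $k$-fundamental group of the Borsuk graph: for $r<2+\frac{2}{k-1}$ one has $\pi_1^k(\Bor(S^1;r^{-1}))\cong\ZZ$, proved by constructing an explicit $k$-covering $\tBor(S^1;r^{-1})\to\Bor(S^1;r^{-1})$ with bipartite, $k$-simply-connected total space. A graph homomorphism $f\colon G\to\Bor(S^1;r^{-1})$ would then send $[\gamma]_k$---torsion in the abelianisation of $\pi_1^k(G)$---to a torsion element of $\ZZ$, hence to zero, contradicting the fact that $f\circ\gamma$ still has odd length. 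This algebraic torsion-versus-parity contradiction replaces the winding-number bookkeeping and requires no enumeration of faces.
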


\subsection{Results}

The goal of this paper is to present a generalization of Youngs' theorem that yields Theorems~\ref{theorem:non-ori}, \ref{theorem:high-dim} and \ref{theorem:circular}. Before stating the main theorem, we define a class of CW complexes. Let $X$ be a CW complex satisfying the following conditions:

\begin{enumerate}[(1)]
\item The $1$-skeleton $X^1$ of $X$ is a simple graph.

\item The attaching map of each $2$-cell of $X$ is a graph homomorphism $C_{2r} \to X^1$ for some $r \ge 2$.
\end{enumerate}
In this case, we say that $X$ is a \emph{CW complex with even $2$-skeleton}. When the attaching map of every $2$-cell is a graph homomorphism from $C_4$, we say that $X$ is a \emph{CW complex with quadrangulated $2$-skeleton}. Note that a cubical complex mentioned in Subsection~\ref{subsection generalizations} is a CW complex with quadrangulated $2$-skeleton.

Now we are ready to state our main theorem of this paper, which is a generalization of Theorems~\ref{theorem:non-ori}, \ref{theorem:high-dim} and \ref{theorem:circular}. To see that the following theorem is a generalization of Theorem~\ref{theorem:non-ori}, see Proposition~\ref{proposition cut}.

\begin{theoremalpha} \label{theorem main}
Let \( X \) be a CW complex with even 2-skeleton, $k$ an integer at least $2$, and suppose that each attaching map \( C_{2i} \to X^1 \) satisfies \( i \le k \).
Assume that there is a closed walk \( \gamma \) in \( X^1 \) with odd length such that the integral homology class $[\gamma]_{\ZZ}$ represented by $\gamma$ is a torsion element in $H_1(X ; \ZZ)$. Then,
\[ \chi_c(X^1) \ge 2 + \frac{2}{k - 1}. \]
\end{theoremalpha}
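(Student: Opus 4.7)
The plan is to assume for contradiction that $\chi_c(X^1) < 2k/(k-1)$ and derive a half-integer-versus-integer inconsistency by encoding a circular coloring as a real $1$-cochain on $X$. Fix an orientation on each edge of $X^1$ and choose a circular $r$-coloring $f \colon V(X^1) \to \RR/\ZZ$ with $r < 2k/(k-1)$, so that the arc distance $d_{S^1}(f(u),f(v)) \ge 1/r > (k-1)/(2k)$ on every edge. Define $\omega \in C^1(X;\RR)$ by taking $\omega(e)$ to be the signed length of the shorter arc from $f(\text{source of }e)$ to $f(\text{target of }e)$, so $|\omega(e)| \in ((k-1)/(2k), 1/2]$. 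Writing $s_e = \mathrm{sign}(\omega(e)) \in \{\pm 1\}$ and $\tau_e = 1/2 - |\omega(e)| \in [0, 1/(2k))$, we get the clean decomposition $\omega(e) = s_e/2 - s_e \tau_e$.

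The key computation is that, for every $2$-cell $F$ of $X$ with attaching map $C_{2i} \to X^1$ and $i \le k$, the coboundary $d\omega(F) = \omega(\partial F)$ equals the winding number $n_F \in \ZZ$ of $f$ around $\partial F$, and substituting the decomposition yields $n_F = S_F/2 - T_F$. Here $S_F$ is a signed sum of $2i$ values each $\pm 1$ (hence an even integer), while $|T_F| < 2i \cdot (1/(2k)) = i/k \le 1$ using $\tau_e < 1/(2k)$ and $i \le k$. Since $n_F$ and $S_F/2$ are both integers with $|n_F - S_F/2| = |T_F| < 1$, we must have $T_F = 0$ and $n_F = S_F/2$. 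Defining $t \in C^1(X;\RR)$ by $t(e) = s_e/2$ gives $dt(F) = S_F/2 = n_F = d\omega(F)$ on every $2$-cell, so $\omega' := \omega - t$ is a genuine real $1$-cocycle on $X$, and defines a class $[\omega'] \in H^1(X;\RR)$.

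To finish, I evaluate $\omega'$ on $\gamma$ two ways. Topologically, $[\gamma]_\ZZ$ is torsion in $H_1(X;\ZZ)$ by hypothesis, so it vanishes in $H_1(X;\RR) \cong H_1(X;\ZZ) \otimes \RR$, and the cocycle-cycle pairing forces $\omega'(\gamma) = 0$. Combinatorially, $\omega'(\gamma) = n_\gamma - s(\gamma)/2$, where $n_\gamma \in \ZZ$ is the winding of $f$ around $\gamma$ and $s(\gamma)$ is a signed sum of $|\gamma|$ terms each in $\{\pm 1\}$. Because each summand is $\equiv 1 \pmod 2$, one has $s(\gamma) \equiv |\gamma| \equiv 1 \pmod 2$, so $s(\gamma)/2 \in \ZZ + 1/2$ and $\omega'(\gamma) \in \ZZ + 1/2$ is nonzero, contradicting $\omega'(\gamma) = 0$.

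The main obstacle is the face-integrality step $n_F = S_F/2$; it rests entirely on the tight numerical bound $|T_F| < i/k \le 1$, which combines the hypothesis $i \le k$ on face sizes with the strict lower bound $|\omega(e)| > (k-1)/(2k)$ coming from $r < 2k/(k-1)$. Once this integrality is in hand, the $1$-cocycle $\omega - t$ extends naturally from $X^1$ to $X$ and its pairing with $[\gamma]_\ZZ$ is squeezed between zero (from $[\gamma]_\ZZ$ being torsion) and a nonzero half-integer (from the odd parity of $|\gamma|$), giving the contradiction; I expect no difficulty in the topological bookkeeping.
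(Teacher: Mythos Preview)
Your argument is correct, and it takes a genuinely different route from the paper's.  The paper proves Theorem~\ref{theorem main} via the machinery of $k$-fundamental groups developed in Section~4: it builds an explicit $k$-covering $\tBor(S^1;r^{-1})\to\Bor(S^1;r^{-1})$, uses it to show that $\pi_1^k(\Bor(S^1;r^{-1}))\cong\ZZ$ precisely when $r<2+\tfrac{2}{k-1}$ (Theorem~\ref{theorem pi_1}), and then observes that the hypothesis on $X$ yields a surjection $H_1(X;\ZZ)\to\pi_1^k(X^1)/[\pi_1^k(X^1),\pi_1^k(X^1)]$ carrying $[\gamma]_{\ZZ}$ to $[\gamma]_k$; a putative coloring would push the torsion class $[\gamma]_k$ forward to a nontrivial odd element of $\ZZ$, which is impossible.

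Your proof replaces all of this by a direct cochain computation.  The decomposition $\omega(e)=\tfrac{s_e}{2}-s_e\tau_e$ with $\tau_e<\tfrac{1}{2k}$, combined with the bound $2i\le 2k$ on face lengths, forces the ``half-integer part'' $t(e)=\tfrac{s_e}{2}$ to satisfy $dt=d\omega$ on every $2$-cell, so $\omega-t$ is a genuine real cocycle on $X$; pairing it with the torsion cycle $\gamma$ then yields the half-integer-versus-zero contradiction.  This is essentially the tension/flow viewpoint on circular colorings (in the spirit of \cite{DGMVZ}) transplanted to the cellular cochain complex of $X$, and it is considerably more elementary and self-contained than the paper's approach: no covering theory, no $\pi_1^k$, only linear algebra over $\RR$ plus a parity count.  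What the paper's route buys in exchange is the explicit determination of $\pi_1^i(\Bor(S^1;r^{-1}))$ for all $i$ (and of $\pi_1^i(K_{n/m})$), which is of independent interest and does not fall out of your argument.

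One cosmetic point: what you call the ``winding number'' $n_F$ is really just the integer $\omega(\partial F)$ obtained by telescoping $\sum_j\omega_j\equiv 0\pmod 1$; it need not literally coincide with the degree of any particular continuous extension of $f\circ\Phi_F$, but your proof only uses $n_F\in\ZZ$, so nothing is lost.
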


Recall that $X^1$ contains an odd closed walk if and only if it is non-bipartite. Note that if $H_1(X; \ZZ)$ is a torsion group, then the condition that $[\gamma]_{\ZZ}$ is a torsion element in $H_1(X; \ZZ)$ is automatically satisfied.

Restricting our attention to quadrangulations and chromatic numbers, we obtain the following theorem from Theorem~\ref{theorem main}:

\begin{theoremalpha} \label{theorem main 2}
Let $X$ be a CW complex with quadrangulated $2$-skeleton. Suppose that there is a closed walk $\gamma$ of $X^1$ with odd length such that the integral homology class $[\gamma]_{\ZZ}$ represented by $\gamma$ is a torsion element in $H_1(X ; \ZZ)$. Then $X^1$ is not $3$-chromatic.
\end{theoremalpha}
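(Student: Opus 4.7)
The plan is to derive Theorem~\ref{theorem main 2} as a direct specialization of Theorem~\ref{theorem main}. Since $X$ has a quadrangulated $2$-skeleton, every attaching map of a $2$-cell is a graph homomorphism $C_4 \to X^1$, so the integer bound $i \le k$ in Theorem~\ref{theorem main} is satisfied with $k = 2$ (every face contributes $i = 2$). The hypothesis that $\gamma$ is a closed walk of odd length in $X^1$ whose integral homology class $[\gamma]_{\ZZ}$ is torsion in $H_1(X;\ZZ)$ is exactly the remaining hypothesis of Theorem~\ref{theorem main}.

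Applying Theorem~\ref{theorem main} with $k = 2$ therefore yields
\[ \chi_c(X^1) \;\ge\; 2 + \frac{2}{k-1} \;=\; 2 + \frac{2}{1} \;=\; 4. \]
Since the circular chromatic number of any graph $G$ satisfies $\chi(G) = \lceil \chi_c(G)\rceil$, this inequality gives $\chi(X^1) \ge 4$. In particular $\chi(X^1) \neq 3$, so $X^1$ is not $3$-chromatic, as required.

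There is no real obstacle here: the entire content of Theorem~\ref{theorem main 2} is packaged inside Theorem~\ref{theorem main}, and all that is needed is to observe that the hypotheses of the latter are met with the extremal value $k = 2$ and that the resulting lower bound $\chi_c(X^1) \ge 4$ forces $\chi(X^1) \ge 4$. (Note also that the existence of an odd closed walk $\gamma$ automatically means $X^1$ is non-bipartite, so the conclusion "not $3$-chromatic" is equivalent to $\chi(X^1)\ge 4$ in this setting.)
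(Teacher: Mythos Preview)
Your derivation is correct: Theorem~\ref{theorem main 2} is indeed an immediate specialization of Theorem~\ref{theorem main} with $k=2$, and the standard relation $\chi(G)=\lceil \chi_c(G)\rceil$ converts the bound $\chi_c(X^1)\ge 4$ into $\chi(X^1)\ge 4$.

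However, this is not the route the paper takes. In Section~\ref{section chromatic} the paper gives a \emph{direct} proof of Theorem~\ref{theorem main 2} (in fact of the stronger Theorem~\ref{theorem chromatic}) using only classical algebraic topology, independent of Theorem~\ref{theorem main}. The argument runs as follows: given a $k$-coloring $f\colon X^1\to K_k$ with no rainbow square, each attaching map $C_4\to X^1$ composes with $f$ to a null-homotopic map (Lemma~\ref{lemma null square}), so $f$ extends over $X^2$ and then, since $\pi_i(K_k)=0$ for $i\ge 2$, over all of $X$ to a continuous $F\colon X\to K_k$. Then $F_*([\gamma]_{\ZZ})$ is torsion in the free group $H_1(K_k;\ZZ)$, hence zero, contradicting Proposition~\ref{proposition odd cycle} (odd closed walks have nonzero $\ZZ_2$-homology). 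This approach avoids the $k$-fundamental group machinery and the analysis of $\Bor(S^1;r^{-1})$ entirely, and it yields the extra ``rainbow square'' conclusion (Theorem~\ref{theorem chromatic}, cf.\ Remark~\ref{remark archdeaconetal}). Your approach, by contrast, is logically shorter given Theorem~\ref{theorem main}, but it imports all of Sections~4--5 as a black box and does not recover the rainbow-square strengthening.
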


Theorem~\ref{theorem main 2} is a generalization of Theorems~\ref{theorem:non-ori} and \ref{theorem:high-dim}. In Section~3, we provide a short proof of Theorem~\ref{theorem main 2} using classical algebraic topology. As a by\mbox{-}product of this proof, we show that for any CW complex with even 2-skeleton such that $H_1(X; \ZZ_2)$ is trivial, the 1-skeleton $X^1$ of $X$ is bipartite (Corollary~\ref{corollary bipartite}). This is a further generalization of the fact that a cellular even-sided embedding on the $2$-sphere is bipartite.

In the proof of Theorem~\ref{theorem main}, we use the discretized fundamental group introduced by the second author \cite{MatsushitaJMSUT} in the study of the fundamental group of Lov\'asz's neighborhood complex \cite{Lovasz}. For a positive integer $k$ and a based graph $(G,v)$, we associate $\pi_1^k(G,v)$, called the $k$-fundamental group. The preliminaries of $k$-fundamental groups will be given in Section~4. As a \mbox{by-product} of our methods, we can refine some results from the recent work of Krebs and Sankar \cite{KrebsSankar} concerning the graph coloring problem of Cayley graphs in terms of circular chromatic numbers (see Corollary~\ref{corollary KS} and Theorem~\ref{theorem refinement}).

\subsection*{Organization of the paper}
In Section~\ref{section preliminaries}, we provide preliminaries of graphs and algebraic topology. Section~\ref{section chromatic} is devoted to the proof of Theorem~\ref{theorem main 2}. In Section~4, we provide preliminaries of $k$-fundamental groups, following \cite{MatsushitaJMSUT} for the proof of Theorem~\ref{theorem main}. In Section~5, we recall the definition of circular chromatic number and provide the proof of Theorem~\ref{theorem main}. In Section~5, we discuss the relationship between our results and the recent work by Krebs and Sankar \cite{KrebsSankar}.

\section{Preliminaries} \label{section preliminaries}

In this section, we review necessary definitions and facts, following \cite{GR, Hatcher, HN, Kozlov}.

\subsection{Graphs}

Throughout this paper, all graphs are assumed to be simple; that is, a graph is a pair $(V, E)$, where $V$ is a set and $E$ is a family of $2$-element subsets of $V$. Note that graphs are not assumed to be finite.

For graphs $G$ and $H$, a \emph{graph homomorphism from $G$ to $H$} is a map $f \colon V(G) \to V(H)$ such that $\{ v,w\} \in E(G)$ implies $\{ f(v), f(w)\} \in E(H)$. Let $K_n$ denote the complete graph with $n$ vertices, i.e., $V(K_n) = \{ 1, \cdots, n\}$ and $i,j \in V(K_n)$ are adjacent if and only if $i \ne j$. An \emph{$n$-coloring of a graph $G$} is a graph homomorphism from $G$ to $K_n$. For $n \ge 3$, the \emph{$n$-cycle graph $C_n$} is defined by $V(C_n) = \ZZ / n$ and $E(C_n) = \{ \{ x,x +1 \} \mid x \in \ZZ / n\}$.

\subsection{Basic facts from algebraic topology}

For basic definitions and facts of CW complexes, we refer the reader to Chapter~0 of \cite{Hatcher}. For a CW complex $X$ and for a non-negative integer $k$, the \emph{$k$-skeleton of $X$} is the subcomplex of $X$ consisting of all $i$-cells of $X$ such that $i \le k$, and is denoted by $X^k$.

\begin{proposition}[{see \cite[Example~1B.1 and Proposition 4.1]{Hatcher}}] \label{proposition KG1}
For every connected graph $G$ and for every integer $i$ at least $2$, $\pi_i(G)$ is trivial; that is, for every $i \ge 2$, every continuous map from $S^i$ to $G$ is null-homotopic.
\end{proposition}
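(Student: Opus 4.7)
The plan is to use classical covering space theory for $1$-dimensional CW complexes. The key observation is that any connected graph $G$, viewed as a topological space via its geometric realization, admits a universal cover $\tilde G$ which is contractible; then the higher homotopy groups of $G$ coincide with those of $\tilde G$, which are trivial.

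First, I would construct the universal cover $\tilde G$ explicitly. Fix a vertex $v_0 \in G$ and let $\tilde G$ be the graph whose vertices are homotopy classes (rel endpoints) of edge-paths from $v_0$ to vertices of $G$, with an edge joining $[\alpha]$ to $[\alpha \cdot e]$ for each edge $e$ of $G$ incident to the endpoint of $\alpha$. This is a standard construction producing a covering graph $p \colon \tilde G \to G$. The geometric point is that $\tilde G$ is a tree: it is connected by construction, and any cycle in $\tilde G$ would project to a loop in $G$ that represents a nontrivial element of $\pi_1(G,v_0)$, contradicting the defining property of the universal cover.

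Second, trees are contractible (deformation retract onto any vertex along unique reduced paths), so $\pi_i(\tilde G) = 0$ for all $i \ge 1$. It remains to show that $p$ induces isomorphisms $p_* \colon \pi_i(\tilde G) \to \pi_i(G)$ for $i \ge 2$. Given a based map $f \colon (S^i, *) \to (G, v_0)$ with $i \ge 2$, the sphere $S^i$ is simply connected, so the standard lifting criterion for covering maps produces a (unique) lift $\tilde f \colon S^i \to \tilde G$ of $f$. Since $\tilde G$ is contractible, $\tilde f$ is null-homotopic, and composing a null-homotopy with $p$ shows $f$ is null-homotopic, which is the claim.

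I do not foresee a real obstacle; the only ingredient beyond point-set topology is the path-lifting/map-lifting property for covering spaces, which applies because graphs (as $1$-dimensional CW complexes) are locally path-connected and semilocally simply connected. An alternative route would be to first collapse a spanning tree to obtain a homotopy equivalence between $G$ and a wedge of circles $\bigvee S^1$, which is a $K(F,1)$ for the free group $F$ on the non-tree edges, and then invoke the standard fact that $K(\pi,1)$-spaces have vanishing $\pi_i$ for $i \ge 2$; but the covering-space argument above is arguably more self-contained.
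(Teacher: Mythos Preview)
Your argument is correct and matches the approach the paper implicitly takes: the paper does not give its own proof but cites \cite[Example~1B.1 and Proposition~4.1]{Hatcher}, where Example~1B.1 shows that a connected graph is a $K(\pi,1)$ because its universal cover is a tree (hence contractible), and Proposition~4.1 is precisely the statement that a covering projection induces isomorphisms on $\pi_i$ for $i\ge 2$. Your alternative route via collapsing a spanning tree to a wedge of circles is also standard and equivalent.
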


For a $k$-cell $\sigma$ of a CW complex $X$, let $\Phi_\sigma \colon S^{k-1} \to X^{k-1}$ denote the attaching map of $\sigma$. Note that a continuous map $f \colon S^{k-1} \to Y$ is null-homotopic if and only if there is a continuous map $f' \colon D^k \to Y$ such that $f'|_{S^{k-1}} = f$. By the definition of CW complexes, we have the following proposition.

\begin{proposition}[{See also \cite[Lemma~4.7 and its proof]{Hatcher}}] \label{proposition extension}
Let $X$ be a CW complex, $Y$ a path-connected topological space, and $f \colon X^k \to Y$ a continuous map. Suppose that for each $(k+1)$-cell $\sigma$ the composition $f \circ \Phi_\sigma \colon S^k \to Y$ is null-homotopic. Then there is a continuous map $f' \colon X^{k+1} \to Y$ such that $f' |_{X^k} = f$.
\end{proposition}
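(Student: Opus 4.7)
The plan is to build $f'$ cell by cell using the hypothesis to extend $f$ across each attached $(k+1)$-disk, and then to invoke the quotient-space definition of $X^{k+1}$ to glue everything into a continuous map.

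First, for each $(k+1)$-cell $\sigma$ of $X$ with attaching map $\Phi_\sigma \colon S^k \to X^k$, the hypothesis says that $f \circ \Phi_\sigma \colon S^k \to Y$ is null-homotopic. By the remark immediately preceding the proposition (that null-homotopy of a map on $S^{k}$ is equivalent to extendability across $D^{k+1}$), I can choose a continuous map $g_\sigma \colon D^{k+1} \to Y$ with $g_\sigma|_{S^k} = f \circ \Phi_\sigma$. This is the only place the hypothesis is used, and fixing such a choice of extension for each $\sigma$ is the first step.

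Next, recall that $X^{k+1}$ is obtained from $X^k$ by attaching the family of $(k+1)$-cells via the maps $\Phi_\sigma$; concretely, if $\Psi_\sigma \colon D^{k+1} \to X^{k+1}$ denotes the characteristic map of $\sigma$, then $X^{k+1}$ carries the quotient topology from $X^k \sqcup \bigsqcup_\sigma D^{k+1}$ under the identifications $x \sim \Phi_\sigma(x)$ for $x \in S^k \subset D^{k+1}$. I would define $f' \colon X^{k+1} \to Y$ by the rule $f'(y) = f(y)$ for $y \in X^k$ and $f'(\Psi_\sigma(x)) = g_\sigma(x)$ for $x \in D^{k+1}$. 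Compatibility on the overlap is immediate: for $x \in S^k$, $\Psi_\sigma(x) = \Phi_\sigma(x) \in X^k$, and
\[ g_\sigma(x) = (f \circ \Phi_\sigma)(x) = f(\Phi_\sigma(x)), \]
so the two definitions agree.

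Finally, continuity follows from the universal property of the quotient topology: a map out of $X^{k+1}$ is continuous if and only if its pullback to $X^k \sqcup \bigsqcup_\sigma D^{k+1}$ is continuous on each summand, and these pullbacks are exactly $f$ and the $g_\sigma$, both of which are continuous by construction. There is no real obstacle here; the entire content is choosing the extensions $g_\sigma$ and verifying the pasting condition on the quotient. The only subtlety worth highlighting is that the axiom of choice is used to select a $g_\sigma$ for each $(k+1)$-cell simultaneously, which is standard and usually left implicit.
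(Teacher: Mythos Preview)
Your proof is correct and is precisely the standard argument the paper has in mind: the paper does not give its own proof but simply remarks that the proposition follows from the definition of CW complexes (with a pointer to Hatcher), and your argument is exactly that elaboration---choose disk extensions $g_\sigma$ from the null-homotopy hypothesis and glue them via the quotient description of $X^{k+1}$.
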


Here we review the cellular homology. See \cite[Section~2.2, Section~3.A]{Hatcher} for details. For a CW complex $X$ and for a commutative ring $R$ with unit, let $C_\bullet(X ; R)$ be the cellular chain complex of $X$. For a non-negative integer $k$, $C_k(X; R)$ can be identified with the free $R$-module generated by the set of $k$-cells of $X$. We set $C_k(X; R) = 0$ for $k < 0$. We can define an $R$-linear map $\partial_k \colon C_k(X; R) \to C_{k-1}(X ; R)$, called the \emph{boundary map}, which satisfies $\partial_{k-1} \circ \partial_k = 0$, and the homology group of $C_k(X; R)$ can be naturally identified with the (singular) homology group $H_k(X; R)$ of $X$.

Here we explain in detail the boundary map in the following case: $X^1$ is a simple graph, $R = \ZZ_2$, and the attaching map of a $2$-cell $\sigma$ is a graph homomorphism $\Phi_\sigma \colon C_n \to X^1$. Then $\sigma \in C_2(X; \ZZ_2)$ and $\partial \sigma \in C_1(X; \ZZ_2)$. Hence $\partial \sigma$ is written as
\[ \partial \sigma = a_1 e_1 + \cdots + a_m e_m,\]
where $a_i \in \ZZ_2$ and $e_i \in E(X^1)$. The coefficient $a_i$ is determined by
\[ a_i = \# \{ e \in E(C_{n}) \mid \Phi_\sigma(e) = e_i \} \quad \mod 2.\]
See \cite[Proposition~2.3, Cellular boundary formula in Section~2.2]{Hatcher}.

In this paper, we regard a circle $S^1$ as $\RR / \ZZ$, and $0$ as its basepoint. Let $\gamma \colon (S^1, 0) \to (X,x_0)$ be a loop of a based space $(X,x_0)$. Let $[S^1] \in H_1(S^1; \ZZ)$ be the fundamental class of $S^1$, and $\gamma_*([S^1]) \in H_1(X; \ZZ)$. This correspondence yields the following group homomorphism
\[ h \colon \pi_1(X, x) \to H_1(X; \ZZ), \quad [\gamma] \mapsto \gamma_*([S^1]),\]
which is called the Hurewicz map.

\begin{theorem}[{\cite[Theorem~2A.1]{Hatcher}}] \label{theorem abelianization}
Let $X$ be a path-connected space and $x_0$ a point of $X$. Then the Hurewicz map induces a group isomorphism
\[ \pi_1(X, x_0) / [\pi_1(X, x_0), \pi_1(X, x_0)] \xrightarrow{\cong} H_1(X; \ZZ).\]
\end{theorem}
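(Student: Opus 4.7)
The plan is to construct an explicit inverse to $\bar h \colon \pi_1(X, x_0)^{\mathrm{ab}} \to H_1(X;\ZZ)$, following the standard strategy from Hatcher.

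First, I would verify that $h$ is a well-defined group homomorphism. Viewing a loop $\gamma$ as a singular $1$-simplex (its boundary is $0$ since $\gamma(0) = \gamma(1) = x_0$) gives a class in $H_1$. A based path-homotopy $F \colon [0,1]^2 \to X$ between loops $\gamma, \delta$ splits along a diagonal into two singular $2$-simplices whose combined boundary realizes $\gamma - \delta$ modulo boundaries from constant simplices, so homotopic loops yield homologous cycles. For the concatenation $\gamma \cdot \delta$, one exhibits the $1$-chain $\gamma + \delta - (\gamma \cdot \delta)$ as the boundary of a singular $2$-simplex that collapses one edge, showing $h$ is a homomorphism. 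Since $H_1$ is abelian, $h$ vanishes on commutators and descends to $\bar h$.

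Next, I would construct an inverse $\bar\Psi \colon H_1(X;\ZZ) \to \pi_1(X, x_0)^{\mathrm{ab}}$ by choosing, for each $x \in X$, a path $\alpha_x$ from $x_0$ to $x$, with $\alpha_{x_0}$ the constant loop. For each singular $1$-simplex $\sigma \colon [0,1] \to X$, set
\[ \hat\sigma := \alpha_{\sigma(0)} \cdot \sigma \cdot \overline{\alpha_{\sigma(1)}}, \]
a loop based at $x_0$, and extend $\ZZ$-linearly to $\Psi \colon C_1(X;\ZZ) \to \pi_1(X, x_0)^{\mathrm{ab}}$ (in additive notation). The crucial step is to check $\Psi(\partial \tau) = 0$ for every singular $2$-simplex $\tau \colon \Delta^2 \to X$ with boundary edges $\tau_0, \tau_1, \tau_2$: the loop $\hat\tau_0 \cdot \overline{\hat\tau_1} \cdot \hat\tau_2$ is null-homotopic, since $\tau$ itself, together with the three paths $\alpha_{\tau(v_i)}$ attached at the vertices $v_i$ of $\Delta^2$, fills a disk. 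Thus $\Psi$ descends to $\bar\Psi$ on $H_1$.

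Finally, I would check the two maps are mutually inverse. Because $\alpha_{x_0}$ is constant, $\bar\Psi \bar h([\gamma]) = [\hat\gamma] = [\gamma]$. Conversely, $h(\hat\sigma)$ is represented by the singular $1$-chain $\alpha_{\sigma(0)} + \sigma - \alpha_{\sigma(1)}$, so for any $1$-cycle $c = \sum n_i \sigma_i$ the endpoint $\alpha$-contributions telescope via $\partial c = 0$, giving $\bar h \bar\Psi([c]) = [c]$. The main obstacle is the null-homotopy in the third step: building an explicit contracting disk for $\hat\tau_0 \cdot \overline{\hat\tau_1} \cdot \hat\tau_2$ demands a careful geometric construction that combines $\tau$ with the three chosen paths in precisely matched parametrizations, and one must track how reparametrizations along the boundary translate into commutator identities in $\pi_1^{\mathrm{ab}}$. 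Once this bookkeeping is arranged, the remaining verifications are routine.
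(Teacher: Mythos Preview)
Your proposal is correct and is precisely the standard argument from Hatcher~\cite[Theorem~2A.1]{Hatcher}. The paper itself does not prove this theorem at all; it is quoted as a background result with a citation to Hatcher, so there is no independent proof in the paper to compare against. Since you have reproduced the argument from the cited source, your write-up is entirely in line with what the paper relies on.
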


By the cellular approximation theorem, we have the following theorem:

\begin{proposition}[{see \cite[Corollary~4.12]{Hatcher}}] \label{proposition 2-connected}
Let $X$ be a CW complex, $A$ a subcomplex of $X$, $x_0$ a vertex of $A$, and $\iota$ the inclusion map from $A$ to $X$. Then the following hold:
\begin{enumerate}[(1)]
\item If $X^2 = A^2$, then $\iota_* \colon \pi_1(A,x_0) \to \pi_1(X,x_0)$ is an isomorphism.

\item If $X^1 = A^1$, then $\iota_* \colon \pi_1(A,x_0) \to \pi_1(X,x_0)$ is surjective.
\end{enumerate}
\end{proposition}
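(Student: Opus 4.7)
The plan is to apply the cellular approximation theorem, which asserts that every map between CW complexes is homotopic to a cellular one, and that the homotopy may be taken relative to any subcomplex on which the map is already cellular.

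For part (2), given $[\alpha]\in\pi_1(X,x_0)$ represented by a based loop $\alpha\colon(S^1,0)\to(X,x_0)$, I would apply cellular approximation relative to the $0$-subcomplex $\{0\}\subset S^1$ to produce a based homotopy from $\alpha$ to a cellular loop $\alpha'$. Since $S^1$ is $1$-dimensional, the image of $\alpha'$ lies in $X^1=A^1\subseteq A$, so $\alpha'$ factors as $\iota\circ\tilde\alpha$ for some based loop $\tilde\alpha$ in $(A,x_0)$, whence $[\alpha]=\iota_*[\tilde\alpha]\in\iota_*\pi_1(A,x_0)$.

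For part (1), the hypothesis $X^2=A^2$ implies $X^1=A^1$, so surjectivity follows from part (2). For injectivity, suppose loops $\alpha,\beta$ in $A$ based at $x_0$ satisfy $\iota_*[\alpha]=\iota_*[\beta]$; by first applying cellular approximation inside $A$, I may assume both are cellular. Choose a based homotopy $H\colon S^1\times I\to X$ with $H(\cdot,0)=\alpha$, $H(\cdot,1)=\beta$, and $H(0,t)=x_0$. Endow $S^1\times I$ with its standard $2$-dimensional CW structure; on the subcomplex $K:=S^1\times\{0,1\}\cup\{0\}\times I$ the map $H$ is already cellular, taking values in $X^1$. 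Cellular approximation relative to $K$ then yields a cellular homotopy $H'$, and since $\dim(S^1\times I)=2$ the image of $H'$ lies in $X^2=A^2\subseteq A$, providing the desired homotopy between $\alpha$ and $\beta$ inside $A$.

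The main point to verify is the relative form of the cellular approximation theorem used at each step, which guarantees that basepoints and endpoints are preserved throughout the homotopy; this is the standard statement of \cite[Theorem~4.8]{Hatcher}. Once this is in place, both conclusions follow purely from a dimension count: $\dim S^1=1$ pushes cellular loops into $X^1$, while $\dim(S^1\times I)=2$ pushes cellular homotopies into $X^2$.
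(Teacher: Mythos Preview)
Your proof is correct and follows exactly the approach the paper indicates: the paper does not give a detailed argument but simply states that the proposition follows from the cellular approximation theorem, citing \cite[Corollary~4.12]{Hatcher}. Your write-up is the standard unpacking of that reference, using cellular approximation on loops (for surjectivity) and on homotopies rel boundary (for injectivity).
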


\subsection{Homology classes of simple closed curves in surfaces}

In relation to Theorem~\ref{theorem:non-ori}, here we discuss the homology classes of some simple closed curves in non-orientable closed surfaces.

Let $M$ be a surface, possibly having a boundary. A \emph{simple closed curve in $M$} is an injective continuous map $\gamma \colon S^1 \to M$. Let $[S^1]_R \in H_1(S^1; R)$ be the fundamental class of $S^1$. We call $\gamma_*([S^1]_R) \in H_1(M; R)$ the \emph{homology class of $M$ determined by $\gamma$}. We write $[\gamma]_R$ to indicate this homology class $\gamma_*([S^1]_R)$.

The following proposition is a well-known fact in topology, and is deduced from \cite[Exercise~31 of Section~3.3]{Hatcher} and the homology exact sequence for pairs.

\begin{proposition} \label{proposition orientable}
Let $M$ be a compact orientable surface whose boundary $\partial M$ is homeomorphic to $S^1$. Then the homology class determined by $\partial M$ in $H_1(M; \ZZ)$ is $0$.
\end{proposition}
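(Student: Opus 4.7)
The plan is to invoke the relative fundamental class of a compact orientable manifold with boundary and then read off the conclusion from the long exact sequence of the pair $(M, \partial M)$. Since $M$ is a compact orientable surface with $\partial M \cong S^1$, I will use the fact referenced in the paper (Exercise~31 of Section~3.3 in Hatcher) that $M$ carries a relative fundamental class $[M, \partial M] \in H_2(M, \partial M ; \ZZ)$, and that the connecting homomorphism
\[ \partial \colon H_2(M, \partial M; \ZZ) \longrightarrow H_1(\partial M; \ZZ) \]
sends $[M, \partial M]$ to the fundamental class $[\partial M]$ of the boundary (equipped with the induced boundary orientation).

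Next I would extract the following segment of the long exact sequence of the pair:
\[ H_2(M, \partial M; \ZZ) \xrightarrow{\partial} H_1(\partial M; \ZZ) \xrightarrow{i_*} H_1(M; \ZZ), \]
where $i \colon \partial M \hookrightarrow M$ denotes the inclusion. Exactness at $H_1(\partial M; \ZZ)$ gives $i_* \circ \partial = 0$, hence
\[ i_*([\partial M]) = i_*(\partial [M, \partial M]) = 0 \]
in $H_1(M; \ZZ)$. Since $\partial M \cong S^1$, the group $H_1(\partial M; \ZZ)$ is infinite cyclic generated by $[\partial M]$. For any homeomorphism $S^1 \to \partial M$ parametrizing the boundary as a simple closed curve, the pushforward of $[S^1]_\ZZ$ is $\pm [\partial M]$, so its image under $i_*$ in $H_1(M; \ZZ)$ vanishes, which is exactly the conclusion of the proposition.

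I do not expect a substantive obstacle; the only delicate point is quoting the precise identity $\partial [M, \partial M] = [\partial M]$. This is standard from the orientation/fundamental-class machinery in Chapter~3 of Hatcher (matching the induced boundary orientation on $\partial M$ with the chosen orientation of $S^1$), and in any event the sign is irrelevant for the conclusion that the class is zero.
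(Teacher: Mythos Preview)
Your argument is correct and is exactly the approach the paper indicates: the paper does not spell out a proof but simply cites Hatcher's Exercise~31 of Section~3.3 (the relative fundamental class $[M,\partial M]$ with $\partial[M,\partial M]=[\partial M]$) together with the homology exact sequence of the pair $(M,\partial M)$, which is precisely what you have unwound. There is nothing to add.
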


This proposition yields the following:

\begin{proposition} \label{proposition cut}
Let $N$ be a non-orientable compact surface, $\gamma$ a simple closed curve of $N$ such that the surface obtained by cutting $N$ along $\gamma$ is orientable. Then, we have $2[\gamma]_{\ZZ} = 0$.
\end{proposition}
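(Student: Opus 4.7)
Let $N'$ denote the surface obtained by cutting $N$ along $\gamma$, and let $q \colon N' \to N$ be the natural quotient map that identifies the new boundary components of $N'$ to recover $\gamma$. My plan is to split according to whether $\gamma$ is one-sided or two-sided in $N$, compute $q_*$ on the classes of the new boundary circles, and conclude using the fact that $[\partial N']$ vanishes in $H_1(N';\ZZ)$---Proposition~\ref{proposition orientable}, together with its extension to multiple boundary components (which follows by applying the long exact sequence of the pair $(N',\partial N')$ to the relative fundamental class $[N',\partial N']$ of the orientable surface $N'$). I will take $N$ to be closed, the case relevant to the applications, so that $\partial N'$ consists exclusively of the circles produced by the cut.

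In the one-sided case, a tubular neighborhood of $\gamma$ is a M\"obius band whose boundary circle is a $2$-fold cover of the core. Hence $\partial N'$ is a single circle $C$ and $q|_C \colon C \to \gamma$ has degree $\pm 2$, so $q_*[C] = \pm 2[\gamma]_{\ZZ}$. I would then apply Proposition~\ref{proposition orientable} to the orientable surface $N'$ to obtain $[C] = 0$ in $H_1(N';\ZZ)$, and push forward by $q_*$ to conclude $2[\gamma]_{\ZZ} = 0$.

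In the two-sided case, a tubular neighborhood of $\gamma$ is an annulus, so cutting produces two new boundary circles $C_1, C_2$ of $\partial N'$, each mapped homeomorphically onto $\gamma$ by $q$. I would first observe that $N'$ must be connected: a disconnected cut would allow each component to be oriented independently and thus force $N$ to inherit an orientation, contradicting the hypothesis. Then, fixing an orientation of $N'$ and letting $\phi \colon C_1 \to C_2$ be the identification recovering $N$, I would argue that $\phi$ preserves the induced boundary orientations, since otherwise the orientation of $N'$ would glue to an orientation of $N$. Orienting $\gamma$ so that $q|_{C_1}$ is orientation-preserving, the identity $q|_{C_2} = q|_{C_1} \circ \phi^{-1}$ then shows $q|_{C_2}$ is orientation-preserving as well, yielding $q_*[C_1] = q_*[C_2] = [\gamma]_{\ZZ}$. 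Combined with $[C_1] + [C_2] = 0$ in $H_1(N';\ZZ)$ from the extended Proposition~\ref{proposition orientable}, pushing forward by $q_*$ gives $2[\gamma]_{\ZZ} = 0$.

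The main subtlety I anticipate is the orientation bookkeeping in the two-sided case: one must carefully verify that $\phi$ preserves (rather than reverses) the induced boundary orientations---forced by $N'$ being orientable and $N$ being non-orientable---and then that this produces a $+$ sign so $q_*[C_1]$ and $q_*[C_2]$ add to $2[\gamma]_{\ZZ}$ rather than cancel. The one-sided case, by contrast, is essentially immediate from Proposition~\ref{proposition orientable} together with the degree-$2$ covering behaviour of the new boundary circle.
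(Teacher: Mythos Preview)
Your argument is correct, and your one-sided case is exactly the paper's proof: the paper asserts that the cut surface $M$ has $\partial M \cong S^1$, that the natural map $f\colon M \to N$ restricts on $\partial M$ to the double cover of $\gamma$, and then concludes $0 = f_*[\partial M]_{\ZZ} = \pm 2[\gamma]_{\ZZ}$ via Proposition~\ref{proposition orientable}.

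The paper, however, treats only this one-sided case. Its claim that $\partial M$ is a single circle fails when $\gamma$ is two-sided, and that situation does occur under the stated hypotheses (for instance, the two-sided curve on the Klein bottle whose complement is an open annulus). Your two-sided branch---the connectedness argument, the observation that the gluing map must preserve the induced boundary orientations lest $N$ inherit an orientation, and the resulting equality $q_*[C_1] = q_*[C_2] = [\gamma]_{\ZZ}$---fills this gap. So your proof follows the same underlying idea as the paper's (push forward the vanishing boundary class of the orientable cut surface) but is more complete; the orientation bookkeeping you flagged as the main subtlety is precisely the content the paper omits.
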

\begin{proof}
Let $M$ be the surface obtained by cutting $N$ along $\gamma$. Then the boundary of $M$ is homeomorphic to $S^1$ and there is a natural continuous map $f \colon M \to N$ such that the restriction to $\partial M$ is the double cover $S^1 \to S^1$. Hence we have $0 = f_* ([\partial M]_{\ZZ}) = \pm 2  [\gamma]_{\ZZ}$ by Proposition~\ref{proposition orientable}.
\end{proof}

\section{Chromatic numbers} \label{section chromatic}

In this section, we provide a short proof of Theorem~\ref{theorem main 2}. Let $G$ be a graph, $k$ an integer at least $3$ and $f \colon G \to K_k$ a graph homomorphism. A \emph{rainbow square} is a $4$-cycle $C_4$ in $G$ such that $f(i) \ne f(j)$ for every distinct pair of vertices $i, j$ of $C_4$. The goal of this section is to prove the following theorem, which is a generalization of Theorem~\ref{theorem main 2}.

\begin{theorem} \label{theorem chromatic}
Let $X$ be a CW complex with quadrangulated $2$-skeleton, $k$ an integer at least $3$ and $f \colon X^1 \to K_k$ a $k$-coloring of $X^1$. Suppose that there is a closed walk $\gamma$ of $X^1$ with odd length such that the integral homology class $[\gamma]_\ZZ$ represented by $\gamma$ is a torsion. Then $X^1$ has a rainbow square. In particular, $\chi(X^1) \ge 4$.
\end{theorem}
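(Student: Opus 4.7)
The plan is to argue by contradiction: assume the coloring $f \colon X^1 \to K_k$ admits no rainbow square. The key observation is that if $\phi \colon C_4 \to X^1$ is the attaching map of a $2$-cell and the composition $f \circ \phi$ uses at most three colors, then, since $f \circ \phi$ is a graph homomorphism (so adjacent vertices go to distinct vertices), two opposite vertices of $C_4$ must share a color. Consequently the loop $f \circ \Phi_\sigma \colon S^1 \to K_k$ has the form $a \to b \to a \to c \to a$ and reduces to the constant loop at $a$ by elementary backtracking, so it is null-homotopic in $K_k$.

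Given this, Proposition~\ref{proposition extension} lets me extend $f$ to a continuous map $\tilde f \colon X^2 \to K_k$. Setting $A = X^2$ in Proposition~\ref{proposition 2-connected}(1) yields $\pi_1(X^2) \cong \pi_1(X)$, and combining with Theorem~\ref{theorem abelianization} gives $H_1(X^2;\ZZ) \cong H_1(X;\ZZ)$. Under this isomorphism $[\gamma]_\ZZ$ remains a torsion element, so $\tilde f_*([\gamma]_\ZZ) \in H_1(K_k;\ZZ)$ is torsion as well. Because $K_k$ is a $1$-dimensional CW complex, it is homotopy equivalent to a wedge of circles, so $H_1(K_k;\ZZ)$ is free abelian, forcing $\tilde f_*([\gamma]_\ZZ) = 0$.

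The contradiction is supplied by the following fact, which I will verify: for any graph $G$, an odd-length closed walk $w$ represents a non-zero class in $H_1(G;\ZZ)$. The quickest way is to observe that the $1$-cochain $\epsilon \in C^1(G;\ZZ_2)$ taking the value $1$ on every edge is automatically a cocycle (as $C^2(G;\ZZ_2) = 0$), and its pairing with the $\ZZ_2$-reduction $[w]_{\ZZ_2}$ equals the length of $w$ modulo $2$. Thus for the odd-length walk $f \circ \gamma$ one has $[f \circ \gamma]_{\ZZ_2} \ne 0$, hence $[f \circ \gamma]_\ZZ = \tilde f_*([\gamma]_\ZZ) \ne 0$, contradicting the previous paragraph.

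Finally, the clause $\chi(X^1) \ge 4$ is immediate from the main assertion: if $X^1$ admitted a $3$-coloring $f \colon X^1 \to K_3$, the first part of the theorem would produce a rainbow $4$-cycle, but $K_3$ has only three vertices, making a rainbow $4$-cycle impossible. I expect the main obstacle to lie in the first paragraph, which pinpoints the correct algebraic reformulation of "no rainbow square", namely null-homotopy of each image loop, since this is precisely what allows Proposition~\ref{proposition extension} to apply and convert the combinatorial hypothesis into a statement about $H_1$.
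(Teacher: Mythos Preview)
Your proof is correct and follows essentially the same strategy as the paper's: argue by contradiction, use the null-homotopy of each $f \circ \Phi_\sigma$ (the paper's Lemma~\ref{lemma null square}) together with Proposition~\ref{proposition extension} to extend $f$ continuously, and then obtain a contradiction from the fact that $H_1(K_k;\ZZ)$ is torsion-free while the image of an odd-length closed walk is nonzero (the paper's Proposition~\ref{proposition odd cycle}, which is the homological dual of your cochain argument). The only difference is cosmetic: the paper extends $f$ all the way to $X$ using $\pi_i(K_k)=0$ for $i\ge 2$ (Proposition~\ref{proposition KG1}), whereas you stop at $X^2$ and invoke $H_1(X^2;\ZZ)\cong H_1(X;\ZZ)$ via Proposition~\ref{proposition 2-connected}.
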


We begin the proof with the following proposition.

\begin{proposition} \label{proposition odd cycle}
Let $X$ be a CW complex with even $2$-skeleton. Let $\gamma$ be a closed walk of $X^1$ with odd length. Then the $\ZZ_2$-homology class $[\gamma]_{\ZZ_2}$ represented by $\gamma$ is non-zero.
\end{proposition}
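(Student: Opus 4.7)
The plan is to exhibit an explicit cochain in $C^1(X;\ZZ_2)$ that is a cocycle and evaluates to $1$ on $\gamma$. Specifically, define $\varphi \in C^1(X;\ZZ_2)$ by $\varphi(e) = 1$ for every edge $e \in E(X^1)$. Once I verify that $\varphi$ is a cocycle, the class $[\varphi] \in H^1(X;\ZZ_2)$ pairs with $[\gamma]_{\ZZ_2}$ via the Kronecker pairing, and any non-zero pairing forces $[\gamma]_{\ZZ_2} \ne 0$.

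The first step is to check that $\gamma$, viewed as $\sum_e n_e \, e \in C_1(X;\ZZ_2)$ with $n_e$ the number of times $\gamma$ traverses $e$, is a $1$-cycle; this is the standard fact that a closed walk enters and leaves each vertex the same number of times, hence the total count at each vertex is even. Then $\varphi(\gamma) = \sum_e n_e \pmod 2$ equals the length of $\gamma$ modulo $2$, and this is $1$ by assumption.

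The main step is to verify $\delta \varphi = 0$, i.e.\ $\varphi(\partial \sigma) = 0$ for every $2$-cell $\sigma$. Using the cellular boundary formula recalled in the excerpt, if $\Phi_\sigma \colon C_{2r} \to X^1$ is the attaching map, then
\[ \partial \sigma \;=\; \sum_{e_i \in E(X^1)} a_i \, e_i \quad \text{with} \quad a_i \;=\; \#\{ e \in E(C_{2r}) \mid \Phi_\sigma(e) = e_i\} \pmod 2. \]
Summing the coefficients gives $\sum_i a_i \equiv |E(C_{2r})| = 2r \equiv 0 \pmod 2$, so $\varphi(\partial \sigma) = \sum_i a_i = 0$. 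This is where the evenness of the $2$-skeleton is crucial; if any attaching cycle had odd length the argument would collapse.

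I do not anticipate a serious obstacle: the proof is essentially a one-cocycle construction, and the only subtlety is making sure the boundary formula is applied correctly when the attaching map is not injective on the edges of $C_{2r}$ (i.e.\ when several edges of the attaching cycle land on the same edge of $X^1$). The formula above already accounts for this via the mod-$2$ multiplicity, so the total count remains $2r$, and the argument goes through verbatim.
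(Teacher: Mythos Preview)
Your proof is correct and essentially identical to the paper's: the cochain $\varphi$ you define is precisely the map $\Phi\colon C_1(X;\ZZ_2)\to\ZZ_2$ the paper uses, and your verification that $\delta\varphi=0$ is exactly the paper's observation that $\Phi\circ\partial=0$. The only cosmetic difference is that you phrase the conclusion via the Kronecker pairing with the cocycle $[\varphi]$, whereas the paper says directly that $\Phi(\gamma)\ne 0$ forces $\gamma\notin\partial C_2(X;\ZZ_2)$; these are the same statement.
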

\begin{proof}
Let $C_\bullet(X; \ZZ_2)$ be the cellular chain complex with $\ZZ_2$-coefficient of $X$. Define the homomorphism $\Phi \colon C_1(X; \ZZ_2) \to \ZZ_2$ by $\Phi([\sigma]) = 1$ for every $1$-cell $\sigma$ of $X$. Since $X$ is a CW complex with even $2$-skeleton, the composition
\[ C_2(X ; \ZZ_2) \xrightarrow{\partial} C_1(X; \ZZ_2) \xrightarrow{\Phi} \ZZ_2 \]
is zero. Since $\gamma$ is a closed walk with odd length, we have $\Phi(\gamma) \ne 0$, and hence $\gamma$ is not contained in $\partial C_2(X; \ZZ_2)$. This means that the $\ZZ_2$-homology class $[\gamma]_{\ZZ_2}$ is non-zero.
\end{proof}

This proposition yields the following noteworthy corollary, which generalizes the well-known fact that every even-sided cellular embedding on \( S^2 \) is bipartite.

\begin{corollary} \label{corollary bipartite}
Let $X$ be a CW complex with even $2$-skeleton. If $H_1(X; \ZZ_2) = 0$, then the $1$-skeleton $X^1$ of $X$ is bipartite.
\end{corollary}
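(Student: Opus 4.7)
The plan is to prove the contrapositive: if $X^1$ is not bipartite, then $H_1(X; \ZZ_2) \neq 0$. Recall the elementary graph-theoretic fact that a simple graph is bipartite if and only if it contains no closed walk of odd length. So assuming $X^1$ is not bipartite, I can pick any odd closed walk $\gamma$ in $X^1$. Viewed in the cellular chain complex $C_\bullet(X; \ZZ_2)$, the sum of the edges of $\gamma$ (with multiplicity counted mod $2$) is a $1$-cycle, since at each vertex an even number of edge-incidences appears; this defines the $\ZZ_2$-homology class $[\gamma]_{\ZZ_2} \in H_1(X; \ZZ_2)$.

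At this point the work is already done: Proposition~\ref{proposition odd cycle} states exactly that under the hypothesis that $X$ has even $2$-skeleton, any such $[\gamma]_{\ZZ_2}$ is nonzero. Applying that proposition produces a nonzero element of $H_1(X; \ZZ_2)$, contradicting the assumption $H_1(X; \ZZ_2) = 0$. Hence $X^1$ is bipartite.

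Since the heavy lifting has been isolated in Proposition~\ref{proposition odd cycle} (whose proof constructs the augmentation $\Phi \colon C_1(X;\ZZ_2) \to \ZZ_2$ that vanishes on $\partial C_2(X;\ZZ_2)$ precisely because all $2$-cells have even boundary walks), the corollary is a purely formal consequence and there is no real obstacle. The only point worth being careful about is that the definition of ``CW complex with even $2$-skeleton'' requires the attaching map of each $2$-cell to factor through some $C_{2r}$ as a graph homomorphism, which is exactly what makes the mod-$2$ boundary of every $2$-cell contain an even number of edges (counted with multiplicity), so that the augmentation $\Phi$ descends to homology and the argument goes through.
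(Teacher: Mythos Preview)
Your proof is correct and follows exactly the approach the paper intends: the paper presents this corollary immediately after Proposition~\ref{proposition odd cycle} with no separate proof, simply noting that ``this proposition yields the following noteworthy corollary,'' and your argument spells out precisely that implication via the contrapositive. Your additional remarks about the augmentation $\Phi$ correctly recapitulate the mechanism behind Proposition~\ref{proposition odd cycle}, so there is nothing to add.
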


Here we note the following lemma, which is easily verified:

\begin{lemma} \label{lemma null square}
Let $k$ be an integer and $f \colon C_4 \to K_k$ a graph homomorphism. If there are distinct $i$, $j \in V(C_4)$ such that $f(i) = f(j)$, then $f$ is null-homotopic.
\end{lemma}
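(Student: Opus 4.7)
The plan is to first reduce to a single case using the fact that $K_k$ is a simple graph, and then recognize that the image of $f$ is a contractible subspace.

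To begin, I would rule out the possibility that $i$ and $j$ are adjacent in $C_4$. Since $f$ is a graph homomorphism, adjacent vertices of $C_4$ must map to adjacent vertices of $K_k$, and in particular to distinct vertices (because $K_k$ is simple and has no loops). Therefore $i$ and $j$ must be the non-adjacent (opposite) pair, and without loss of generality I may assume $\{i,j\} = \{0,2\} \subset V(C_4) = \ZZ/4$. Set $a = f(0) = f(2)$, $b = f(1)$, $c = f(3)$. By the homomorphism condition, $\{a,b\}$ and $\{a,c\}$ are edges of $K_k$ (and in particular $a \neq b$ and $a \neq c$).

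Next I would identify the topological image $f(C_4) \subset K_k$. The four edges $\{0,1\}, \{1,2\}, \{2,3\}, \{3,0\}$ of $C_4$ are mapped onto the edges $\{a,b\}$ and $\{a,c\}$ of $K_k$; thus $f(C_4)$ is either a single edge (when $b=c$) or a path of length two $b\text{--}a\text{--}c$ (when $b \ne c$). In either case $f(C_4)$ is a tree, hence contractible.

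Finally, since $f$ factors as the composition
\[ C_4 \xrightarrow{\bar{f}} f(C_4) \hookrightarrow K_k \]
and any continuous map into a contractible space is null-homotopic, composing $\bar{f}$ with a contraction of $f(C_4)$ to a point and then including into $K_k$ gives a null-homotopy of $f$. There is no real obstacle here: the only point that requires any care is the initial observation that the loopless (simple) structure of $K_k$ forces the identified pair to be non-adjacent, after which the conclusion is the elementary fact that a map from $S^1$ into a tree contracts.
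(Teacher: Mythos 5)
Your proof is correct; the paper omits the argument entirely, stating only that the lemma ``is easily verified,'' and your verification is the natural one: simplicity of $K_k$ forces the identified pair to be antipodal, after which the image of $f$ is a tree (an edge or a path of length two) and hence contractible, so $f$ is null-homotopic. No gaps.
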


We are now ready to prove Theorem~\ref{theorem chromatic}.

\begin{proof}[Proof of Theorem~\ref{theorem chromatic}]
Let  $f \colon X^1 \to K_k$ be a graph homomorphism and suppose that $f$ has no rainbow square. Since $X$ is a CW complex with quadrangulated $2$-skeleton, an attaching map of a $2$-cell $\sigma$ of $X$ is a graph homomorphism $\varphi_\sigma \colon C_4 \to X^1$. Then Lemma~\ref{lemma null square} implies that $\varphi_\sigma$ is null-homotopic. This means that there is a continuous map $f' \colon X^2 \to K_k$ such that $f'|_{X^1} = f$ (Proposition~\ref{proposition extension}). Since $\pi_i(K_k) = 0$ for every $i \ge 2$ (Proposition~\ref{proposition KG1}), it follows that there is a continuous map $F \colon X \to K_k$ such that $F|_{X^2} = f'$.

We now consider $F_*([\gamma]_{\ZZ})$ in $H_1(K_k ; \ZZ)$. Since $\tau = [\gamma]_{\ZZ}$ is a torsion element and $H_1(K_k; \ZZ) \cong \ZZ^{\frac{(k-1)(k-2)}{2}}$, we have $F_*([\gamma]_{\ZZ}) = 0$. On the other hand, we can show that $F_*([\gamma]_{\ZZ})$ is non-zero as follows: First, note that $F_*([\gamma]_{\ZZ}) = [F \circ \gamma]_{\ZZ} = [f \circ \gamma]_{\ZZ}$. Hence $F_*([\gamma]_{\ZZ})$ is the homology class represented by the closed walk $f \circ \gamma$ with odd length in $K_k$. Since the natural homomorphism $H_1(X; \ZZ) \to H_1(X; \ZZ_2)$ sends $[f \circ \gamma]_{\ZZ}$ to $[f \circ \gamma]_{\ZZ_2}$, Proposition~\ref{proposition odd cycle} implies that $F_*([\gamma]_\ZZ) = [f \circ \gamma]_{\ZZ}$ is non-zero. This is a contradiction.
\end{proof}

\begin{remark} \label{remark archdeaconetal}
Archdeacon et al.~\cite{AHNNO} proved Theorem~\ref{theorem:non-ori} by showing the following stronger theorem: Let $G$ be a quadrangulation of a non-orientable closed surface $N$ with an odd cycle $C$ that cuts open $N$ into an orientable surface.
Then, for any coloring of $G$, there exists at least one face whose four corners get four distinct colors (see \cite[Theorem 9]{AHNNO}). Theorem~\ref{theorem chromatic} is also a generalization of this result.
\end{remark}

\begin{remark} \label{remark kaiser}
Kaiser et al.~\cite{Kaiseretal} in fact showed the following generalizing form of Theorem~\ref{theorem:high-dim}. Suppose that $X$ is a $d$-dimensional closed manifold satisfying the following conditions. Here, $\smile$ denotes the cup product.
\begin{enumerate}[(1)]
\item For any non-trivial element $\alpha$ of $H^{d-2}(X; \ZZ_2)$, there exists $\beta \in H^2(X; \ZZ_2)$ such that $\alpha \smile \beta \ne 0$.

\item For any non-trivial elements $\alpha, \beta \in H^1(X; \ZZ_2)$, $\alpha \smile \beta \ne 0$.
\end{enumerate}
Then, \cite[Theorem~4.1]{Kaiseretal} states that any quadrangulation of $X$ is not 3-chromatic.

Using Proposition~\ref{proposition odd cycle}, we can strengthen
their result \cite[Theorem~4.1]{Kaiseretal} as follows: If $X$ is a CW complex with quadrangulated 2-skeleton and its $1$-skeleton $X^1$ is $3$-chromatic, then there is a non-trivial cohomology class $\alpha \in H^1(X;\ZZ_2)$ such that $\alpha \smile \alpha = 0$. To see this, suppose that $X^1$ is $3$-chromatic. Then, there is a graph homomorphism $f \colon X^1 \to K_3$. Since $X^1$ is non-bipartite, $X^1$ has an odd cycle $\gamma$. Then $f_* \colon H_1(X^1; \ZZ_2) \to H_1(K_3; \ZZ_2)$ sends $[\gamma]_{\ZZ_2}$ to $[f \circ \gamma]_{\ZZ_2}$, and by Proposition~\ref{proposition odd cycle}, these homology classes are non-trivial. By the universal coefficient theorem (see \cite[Section~3.1]{Hatcher}), the map $f^* \colon H^1(K_3; \ZZ_2) \to H^1(X; \ZZ_2)$ is non-zero. Let $\beta \in H^1(K_3; \ZZ_2)$ be the generator. Then $\alpha = f^*(\beta)$ is non-trivial, and $\displaystyle \mbox{$\alpha \smile \alpha = f^*(\beta) \smile f^*(\beta)$} = \mbox{$f^*(\beta \smile \beta$)} = 0$. This completes the proof.

Let $m$ be a positive integer, $l_1, \cdots, l_n$ integers relatively prime to $m$, and assume that $n \ge 2$. Then the lens space $L_m(l_1, \cdots, l_n)$ is defined as an orbit space of a free action of $\ZZ_m$ on $S^{2n-1}$(see \cite[Example~2.43]{Hatcher}). If $m$ is divisible by $4$, then the square $\alpha \smile \alpha$ of the generator $\alpha$ of $H^1(L_m(l_1, \cdots, l_n); \ZZ_2) \cong \ZZ_2$ vanishes.

Indeed, suppose that $m$ is even. Considering the generators of cellular cohomology of standard cell structure (see \cite[Example~2.43]{Hatcher}), the map
\[\Phi \colon \ZZ_m \cong H^i(L_m(l_1, \cdots, l_n); \ZZ_m) \to H^i(L_m(l_1, \cdots, l_n); \ZZ_2) \cong \ZZ_2\]
induced by the ring homomorphism $\ZZ_m \to \ZZ_2$ is the surjective homomorphism for $i = 0,1, \cdots, 2n - 1$. Let $\alpha'$ be a generator of $H^1(L_m(l_1, \cdots, l_n); \ZZ_m)$ and $\beta'$ a generator of $H^2(L_m(l_1, \cdots, l_n); \ZZ_m)$. Then by \cite[Example~2.43]{Hatcher}, we have $\alpha' \smile \alpha' = 2^{-1}m \beta'$.

Hence if $m$ is even and not divisible by $4$, we have that $\alpha \smile \alpha = \Phi(\beta')$ is the generator of $H^2(L_m(l_1, \cdots, l_n); \ZZ_2)$, and hence we have
\[ H^*(L_m(l_1, \cdots, l_n); \ZZ_2) \cong H^*(\RR P^{2n-1} ; \ZZ_2).\]
Thus, in this case $L_m(l_1, \cdots, l_n)$ satisfies conditions (1) and (2) above, and \cite[Theorem~4.1]{Kaiseretal} can be applied to it.

On the other hand, suppose that $m$ is divisible by $4$. Then,
\[ \alpha \smile \alpha = \Phi(\alpha' \smile \alpha') = 2^{-1}m \cdot \Phi(\beta) = 0.\]
in $H^2(L_m(l_1, \cdots, l_n); \ZZ_2)$.
Hence \cite[Theorem~4.1]{Kaiseretal} cannot be applied to this case\noindent
\footnote{There is a minor error in \cite{Kaiseretal}. In the preceding paragraph of \cite[Corollary~4.3]{Kaiseretal}, they stated that the cohomology ring of $H^*(L_m(l_1, \cdots, l_n); \ZZ_2)$ is isomorphic to $H^*(\RR P^{2n-1}; \ZZ_2)$ when $m$ is even. However, this is true only when $m$ is even and not divisible by $4
$, and hence \cite[Theorem~4.1]{Kaiseretal} cannot be applied to the case that $m$ is divisible by $4$.}.
However, by Theorem~\ref{theorem chromatic}, the $1$-skeleton of any CW structure of $L_m(l_1, \cdots, l_n)$ with quadrangulated $2$-skeleton is not $3$-chromatic. We also note that when $m$ is odd, Corollary~\ref{corollary bipartite} implies that the $1$-skeleton of any CW structure $X$ of $L_m(l_1, \cdots, l_n)$ with quadrangulated $2$-skeleton is bipartite.
\end{remark}

Hence, by Remark~\ref{remark kaiser}, the following corollary to Theorem~\ref{theorem main 2} is a result strictly stronger than the previous result by \cite[Corollary~4.3]{Kaiseretal}.

\begin{corollary}
Let $m$ be a positive integer, $n$ an integer at least $2$, and $l_1, \cdots, l_n$ integers relative to $m$. Then, every quadrangulation of $L_m(l_1, \cdots, l_n)$ is not $3$-chromatic.
\end{corollary}

Note that, using Theorem~\ref{theorem main}, we can obtain a more refined version concerning the circular chromatic number.

\section{$k$-fundamental groups}

In this section, we provide preliminaries on $k$-fundamental groups, following \cite{MatsushitaJMSUT}. The $k$-fundamental group was introduced in \cite{MatsushitaJMSUT} in the study of the fundamental group of Lov\'asz's neighborhood complex \cite{Lovasz}, and was applied to multiplicative graphs in \cite{TWSIAM, WrochnaJCTB} and the homomorphism reconfiguration problem \cite{Matsushita_arxiv}. Throughout this section, we assume that $k$ is a fixed positive integer.

\subsection{$k$-fundamental groups} \label{subsection k-fundamental group}
Let $P_n$ be the path graph with $n + 1$ vertices. Namely, $V(P_n) = \{ 0,1,\ldots, n\}$ and $E(P_n) = \{ (x,y) \mid |x - y| = 1\}$. A \emph{walk in a graph $G$ of length $n$} is a graph homomorphism $\gamma \colon P_n \to G$, and we set $\mathrm{length}(\gamma) = n$. For a pair of walks $\gamma \colon P_n \to G$ and $\delta \colon P_m \to G$ such that $\gamma(n) = \delta(0)$, the \emph{concatenation $\gamma \cdot \delta \colon P_{m+n} \to G$ of $\gamma$ and $\delta$} is defined by
\[ \gamma \cdot \delta (i) = \begin{cases}
\gamma(i) & (0 \le i \le n)\\
\delta(i - n) & (n \le i \le n + m).
\end{cases}\]
For a walk $\gamma \colon P_n \to G$ of $G$, we define the walk $\bar{\gamma} \colon P_n \to G$ by $\bar{\gamma}(i) = \gamma(n-i)$.

Let $v,v' \in V(G)$, and let $\Omega (G,v,v')$ be the set of walks joining $v$ to $v'$. Let $\simeq_k$ be the equivalence relation on $\Omega(G,v,v')$ generated by the following two relations (A) and (B$)_k$: For walks $(x_0, \cdots, x_m)$ and $(y_0, \cdots, y_n)$ joining $v$ to $v'$,
\begin{enumerate}[(A)]
\item[(A)] $n = m + 2$ and there is $j \in \{ 0,1,\ldots, m\}$ such that $x_i = y_i$ for every $i \le j$ and $x_i = y_{i+2}$ for every $i \ge j$.

\item[(B)$_k$] $m = n$ and $\# \{ i \mid x_i \ne y_i\} < k$.
\end{enumerate}

Let $\pi_1^k(G,v,v')$ be the quotient set $\Omega(G,v,v') / {\simeq_k}$. We call the equivalence class containing $\gamma \in \Omega(G,v,v')$ the \emph{$k$-homotopy class of $\gamma$}, and is denoted by $[\gamma]_k$. We write $\pi_1^k(G,v)$ instead of $\pi_1^k(G,v,v)$. The concatenation of walks defines a group operation on $\pi_1^k(G,v)$. We call this group the \emph{$k$-fundamental group of the based graph $(G,v)$}.

By the definition of the $k$-fundamental group $\pi_1^k(G,v)$, the following is a well-defined group homomorphism
\begin{equation} \label{parity hom}
\pi_1^k(G,v) \to \ZZ / 2, \quad [\gamma] \mapsto \mathrm{length}(\gamma)\mod 2.
\end{equation}

\begin{remark} \label{remark parity}
In particular, for every closed walk $\gamma$ with odd length, the $k$-homotopy class $[\gamma]_k$ of $\gamma$ is non-trivial in $\pi_1^k(G,v)$.
\end{remark}

The kernel of the group homomorphism described in \eqref{parity hom} is called the \emph{even part of $\pi_1^k(G,v)$} and is denoted by $\pi_1^k(G,v)_{ev}$. If the component of $G$ containing $v$ is bipartite, then $\pi_1^k(G,v)_{ev}$ coincides with $\pi_1^k(G,v)$. If the component of $G$ containing $v$ is non-bipartite, then $\pi_1^k(G,v)_{ev}$ is a subgroup of $\pi_1^k(G,v)$ whose index is $2$.

Note that a basepoint-preserving graph homomorphism $f \colon (G,v) \to (H,w)$ induces a group homomorphism $f_* \colon \pi_1^k(G,v) \to \pi_1^k(H,f(v))$, which preserves the parities.

\begin{theorem}[{\cite[Theorem~3.2]{MatsushitaJMSUT}}] \label{theorem X_k}
Let $(G,v)$ be a based graph. Let $X_k(G)$ be the $2$-dimensional CW complex defined as follows:
\begin{enumerate}[(1)]
\item The $1$-skeleton of $X_k(G)$ is $G$.

\item For every graph homomorphism $f \colon C_{2i} \to G$ with $i \le k$, we attach a $2$-cell to $G$ along $f$. Conversely, every $2$-cell of $X_k(G)$ is obtained in this way.
\end{enumerate}
Then there is a natural isomorphism $\Psi \colon \pi_1^k(G,v) \xrightarrow{\cong} \pi_1(X_k(G), v)$.
\end{theorem}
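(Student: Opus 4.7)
The plan is to define $\Psi$ at the walk level, verify it descends to a group homomorphism on $\pi_1^k(G,v)$, and then identify both $\pi_1^k(G,v)$ and $\pi_1(X_k(G),v)$ as the same quotient of $\pi_1(G,v)$. A closed walk $\gamma \colon P_n \to G$ based at $v$ realizes as a loop in the topological realization of $G \hookrightarrow X_k(G)$; I would set $\Psi([\gamma]_k)$ to be its based homotopy class. Concatenation of walks corresponds to concatenation of loops, so once $\Psi$ is well-defined it is automatically a group homomorphism. Well-definedness under relation (A) is immediate, since a backtrack $x,y,x$ contributes an edge traversed forwards then backwards, which is null-homotopic inside the realization of $G$.

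For relation $(\textrm{B})_k$, suppose $\gamma_x = (x_0, \ldots, x_n)$ and $\gamma_y = (y_0, \ldots, y_n)$ satisfy $|S| < k$, where $S = \{ i : x_i \neq y_i \}$; note that $0, n \notin S$. Partition $S$ into maximal runs of consecutive indices and enclose the $j$-th run in an interval $[a_j, b_j]$ with $x_{a_j} = y_{a_j}$, $x_{b_j} = y_{b_j}$, and $x_i \neq y_i$ for $a_j < i < b_j$. The cyclic sequence
\[ (x_{a_j}, x_{a_j+1}, \ldots, x_{b_j}, y_{b_j - 1}, y_{b_j - 2}, \ldots, y_{a_j + 1}) \]
is a graph homomorphism $C_{2(b_j - a_j)} \to G$, and since $b_j - a_j - 1 \le |S| < k$ we have $b_j - a_j \le k$, so the corresponding $2$-cell is present in $X_k(G)$. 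Filling each block in turn yields a homotopy from $\gamma_x$ to $\gamma_y$ in $X_k(G)$, so $\Psi$ descends to $\pi_1^k(G,v)$.

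Under the classical identification of $\pi_1(G,v)$ with closed walks at $v$ modulo relation (A), the standard CW presentation gives $\pi_1(X_k(G), v) \cong \pi_1(G, v)/N$, where $N$ is the normal closure of the conjugated $2$-cell boundaries $\gamma_\varphi$, indexed by graph homomorphisms $\varphi \colon C_{2i} \to G$ with $i \le k$. Likewise $\pi_1^k(G,v) \cong \pi_1(G,v)/M$, with $M$ the normal closure of the $(\textrm{B})_k$-relators $\gamma_x \cdot \bar{\gamma}_y$. The previous paragraph shows $M \subseteq N$, so $\Psi$ is the induced surjection $\pi_1(G,v)/M \twoheadrightarrow \pi_1(G,v)/N$; surjectivity also follows directly from Proposition~\ref{proposition 2-connected}(2). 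For injectivity it remains to show $N \subseteq M$, namely that each $\gamma_\varphi$ is trivial in $\pi_1^k(G,v)$. Split $\varphi \colon C_{2i} \to G$ across the ``diameter'' $0 \leftrightarrow i$ into two half-walks $\alpha = (\varphi(0), \varphi(1), \ldots, \varphi(i))$ and $\beta = (\varphi(0), \varphi(2i-1), \ldots, \varphi(i))$, each of length $i$. They agree at their endpoints but may differ at each of the $i - 1$ interior positions; since $i - 1 < k$, relation $(\textrm{B})_k$ yields $\alpha \simeq_k \beta$, and hence $\gamma_\varphi = \alpha \cdot \bar{\beta} \simeq_k \beta \cdot \bar{\beta} \simeq_A$ trivial walk at $\varphi(0)$; conjugating by a spanning-tree path to $v$ yields the same conclusion in $\pi_1^k(G,v)$.

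The main obstacle is this last step: $(\textrm{B})_k$ allows strictly fewer than $k$ simultaneous vertex changes, so attempting to deform a $C_{2k}$-boundary to a point in one pass would require $2k - 2$ changes and fail for $k \ge 2$. The split-cycle trick halves this to $i - 1 \le k - 1$ changes, matching the $(\textrm{B})_k$ bound exactly and explaining why the threshold of face sizes in $X_k(G)$ and the parameter $k$ in $\pi_1^k(G,v)$ are in precise correspondence.
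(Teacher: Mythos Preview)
The paper does not prove Theorem~\ref{theorem X_k}; it is quoted from \cite{MatsushitaJMSUT} and only the explicit description of the map $\Psi$ is supplied afterward. So there is no in-paper proof to compare against directly.

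That said, your argument is correct and matches the standard route. The definition of $\Psi$ you give agrees with the paper's description of $\hat{\gamma}$; well-definedness under (A) and your block-by-block filling for (B)$_k$ are fine (each block yields a homomorphism $C_{2(b_j-a_j)}\to G$ with $2\le b_j-a_j\le k$, hence a $2$-cell of $X_k(G)$). Surjectivity is precisely Proposition~\ref{proposition 2-connected}(2). Your ``split-cycle trick'' for injectivity---halving a $C_{2i}$-boundary into two length-$i$ walks that differ in at most $i-1<k$ interior positions---is exactly Lemma~\ref{lemma short walk} of the present paper, which the authors do prove. So although the theorem itself is only cited, the one nontrivial ingredient of your proof appears verbatim in the surrounding text, and your overall outline coincides with what one finds in \cite{MatsushitaJMSUT}.
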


This isomorphism $\Psi$ is described as follows: Let $\gamma \in \Omega(G,v)$ be a closed walk of $(G,v)$. Then $\Psi([\gamma]_k)$ is the homotopy class of the closed curve $\hat{\gamma} \colon [0,1] \to X_k(G)$ defined by
\[ \hat{\gamma} \left( \frac{i + t}{n} \right) = (1-t)\gamma(i) + t \gamma(i+1),\]
where $i$ is an integer such that $0 \le i < n$ and $t$ is a real number with $0 \le t \le 1$.
Here we consider the right-hand side of the above equation as the point of the edge $\{ \gamma(i), \gamma(i+1)\}$ divided into $1-t$ and $t$.

Here, we discuss the relationship between $k$-fundamental groups and Lov\'asz's neighborhood complex $\N(G)$ of \cite{Lovasz}.

\begin{definition}[{see \cite[Section~4]{MatsushitaJMSUT}}] \label{definition k-neighborhood}
Let $G$ be a graph, $x$ a vertex of $G$, and $k$ a positive integer. Define $N_G(x)$ by $N_G(x) = \{ y \in V(G) \mid (x,y) \in E(G) \}$. For a positive integer $k$, we define the \emph{$k$-neighborhood $N_G^k(x)$ of $x$} recursively by
\[ N^1_G(x) = N_G(x), \quad N^{k+1}_G(x) = \bigcup_{y \in N^k_G(x)} N_G(y).\]
We write $N(x)$ (or $N^k(x)$) instead of $N_G(x)$ (or $N^k_G(x)$, respectively) if there is no risk of confusion.
\end{definition}

For $j \ge 1$, the \emph{$j$-neighborhood complex $\N^j(G)$ of a graph $G$} is defined to be the (abstract) simplicial complex whose vertex set is $V(G)$ and whose simplices are finite subsets of $V(G)$ contained in $N^j(x)$ for some $x \in V(G)$. Then, the neighborhood complex $\N(G)$ coincides with the $1$-neighborhood complex $\N^1(G)$.

\begin{theorem}[{\cite[Theorem~1.1]{MatsushitaJMSUT}}] \label{theorem neighborhood complex}
Let $(G,v)$ be a graph with a non-isolated basepoint $v$, and let $j$ be a positive integer. Then there is a natural isomorphism
\[
\pi^{2j}_1(G,v)_{ev} \xrightarrow{\cong} \pi_1(\N^j(G),v).
\]
\end{theorem}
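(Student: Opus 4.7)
The plan is to construct the isomorphism $\Phi$ explicitly via a subsampling construction on walks, and then verify it is well-defined, a group homomorphism, and bijective. For a closed walk $\gamma = (x_0, x_1, \ldots, x_n)$ of even length based at $v$ in $G$, I would first use relation (A) to pad $\gamma$ by adjoining backtracks at the basepoint (such backtracks exist by the non-isolation hypothesis) until its length becomes $2jm$ for some integer $m$. Writing the padded walk as $\gamma' = (x_0, \ldots, x_{2jm})$, set
\[
\overline{\gamma} := (x_0, x_{2j}, x_{4j}, \ldots, x_{2jm}),
\]
which I claim is a closed walk at $v$ in the $1$-skeleton $\N^j(G)^{(1)}$: for each $i$, the two length-$j$ subwalks of $\gamma'$ meeting at the midpoint $x_{(2i+1)j}$ witness that $x_{2ij}$ and $x_{2(i+1)j}$ both lie in $N^j(x_{(2i+1)j})$. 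Define $\Phi([\gamma]_{2j}) := [\overline{\gamma}] \in \pi_1(\N^j(G), v)$, with the convention that consecutive repetitions arising from the padding are collapsed.

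The bulk of the work is showing $\Phi$ is well-defined, which amounts to three invariances: (i) the class is independent of the choice of padding, (ii) relation (A) is preserved, and (iii) relation (B)$_{2j}$ is preserved. For (i) and (ii), inserting or removing a single backtrack $\ldots, x_i, w, x_i, \ldots$ perturbs $\overline{\gamma}$ only in a window of length $2j$, and the perturbation is realized as a composition of triangle moves in $\N^j(G)$ arising from simplices $\{a, b, w\} \subseteq N^j(x_i)$. Invariance (iii) requires a more delicate window argument: given walks of common length $2jm$ differing on a set $S$ with $|S| < 2j$, at each subsampled index $k$ with $x_{2jk} \neq y_{2jk}$, I would locate a position $p$ within a window of length $2j$ around $2jk$ at which the two walks agree; the common value $x_p = y_p$ then serves as a pivot producing the required triangle move in $\N^j(G)$. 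Compatibility with concatenation is then immediate, so $\Phi$ is a group homomorphism.

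For bijectivity I would construct the inverse by lifting: any closed walk $(y_0, \ldots, y_n)$ in $\N^j(G)^{(1)}$ with $\{y_i, y_{i+1}\} \subseteq N^j(z_i)$ lifts to a closed walk of length $2jn$ in $G$ obtained by concatenating, for each $i$, a length-$j$ walk from $y_i$ to $z_i$ with a length-$j$ walk from $z_i$ to $y_{i+1}$. Surjectivity of $\Phi$ follows at once. Injectivity reduces to showing that distinct choices of pivots $z_i$ and of lifting walks give $\simeq_{2j}$-equivalent walks in $G$, and that triangle $2$-cells of $\N^j(G)$ lift to $\simeq_{2j}$-trivial loops. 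Both statements boil down to the observation that two walks of length $2j$ sharing their endpoints and their midpoint differ in at most $2j - 1$ positions, which is precisely the content of relation (B)$_{2j}$.

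\textbf{Main obstacle.} The most delicate step, appearing symmetrically on both sides of the isomorphism, is the combinatorial realization of local modifications of the subsampled walk by triangle moves in $\N^j(G)$ under relation (B)$_{2j}$. Selecting a pivot from a window of length $2j$ (possible because $|S| < 2j$), handling window boundaries, and inductively reducing arbitrary $(2j)$-homotopies to compositions of elementary triangle moves is where the main technical effort will lie.
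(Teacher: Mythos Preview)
The present paper does not give its own proof of this statement: Theorem~\ref{theorem neighborhood complex} is quoted verbatim from \cite[Theorem~1.1]{MatsushitaJMSUT} and used only as background (in the remark at the end of Section~5.1). There is therefore no in-paper argument to compare your proposal against.

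As for the proposal itself, the strategy is sound and is essentially the expected direct construction: you are implicitly identifying $\pi_1(\N^j(G),v)$ with the edge-path group of the simplicial complex $\N^j(G)$ (edge-walks modulo moves across $2$-simplices), and then matching that description against closed even walks in $G$ subsampled at stride $2j$. The pigeonhole observation that any window of $2j$ consecutive indices must contain an agreement point when $|S|<2j$ is exactly the right engine for invariance under (B)$_{2j}$. Two small points to tighten: first, several subsampled indices $2jk$ may lie in $S$ simultaneously (e.g.\ $S=\{2j,4j\}$ when $j\ge 2$), so the passage from $\overline{\gamma}$ to $\overline{\gamma'}$ must be realised as a sequence of elementary simplex moves, and you should verify that each intermediate sequence is a genuine edge-walk in $\N^j(G)$; second, your count ``at most $2j-1$ positions'' for walks of length $2j$ sharing both endpoints and midpoint is actually $2j-2$, though this only strengthens the conclusion. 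You should also state explicitly that you are invoking the edge-path group description of $\pi_1$ of a simplicial complex, since $\N^j(G)$ can have simplices of arbitrarily high dimension and it is the restriction to the $2$-skeleton that makes the triangle-move calculus sufficient.
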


\subsection{$k$-covering maps} \label{subsection k-covering}

There is a close relationship between the ordinary fundamental group and covering spaces: it is well known that to each subgroup of the fundamental group, there corresponds a connected based covering space. The covering space corresponding to the $k$-fundamental group is the $k$-covering map described below (Definition~\ref{definition k-covering}).

Note that $w \in N^k(v)$ if and only if there is a walk $\gamma$ joining $v$ to $w$ of length $k$.

\begin{definition}[{see \cite[Definition~6.1]{MatsushitaJMSUT}}] \label{definition k-covering}
A graph homomorphism $p \colon G \to H$ is called a \emph{$k$-covering map} if for every $x \in V(G)$ and for every integer $i$ with $1 \le i \le k$, the map $p|_{N^i_G(x)} \colon N^i_G(x) \to N^i_H(p(x))$ is bijective.
\end{definition}

A covering map of a graph in the classical sense is a $1$-covering map (see \cite{GR}). The following lemma allows us to shorten the task of checking whether a graph homomorphism is a $k$-covering map or not.

\begin{lemma}[{\cite[Lemma~6.2]{MatsushitaJMSUT}}] \label{lemma checking criterion}
Let $p \colon G \to H$ be a graph homomorphism. Then the following are equivalent:
\begin{enumerate}[(1)]
\item The graph homomorphism $p$ is a $k$-covering map.

\item For every $x \in V(G)$, the map $p|_{N_G(x)} \colon N_G(x) \to N_H(x)$ is surjective and $p|_{N^k_G(x)} \colon N^k_G(x) \to N^k_H(p(x))$ is injective.
\end{enumerate}
\end{lemma}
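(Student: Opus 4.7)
The direction (1) $\Rightarrow$ (2) is immediate from the definition, since a $k$-covering map is bijective on $N_G^1$ and on $N_G^k$. My plan for the substantial direction (2) $\Rightarrow$ (1) is a two-stage upgrade: first I would boost the given level-1 surjectivity to full level-1 bijectivity, so that $p$ becomes a covering in the classical sense; then, exploiting unique walk-lifting, I would promote the given level-$k$ injectivity to injectivity at every intermediate level $1 \le i \le k$. Surjectivity at every level then comes for free from the existence of walk-lifts, and the case $i = 0$ is trivial, since $N_G^0(x) = \{x\}$.

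For the first upgrade, suppose toward contradiction that there exist $y_1 \ne y_2 \in N_G(x)$ with $p(y_1) = p(y_2)$. If $k$ is odd, the alternating walk $x \to y_j \to x \to y_j \to \cdots$ of length $k$ ends at $y_j$, so $y_1, y_2 \in N_G^k(x)$, contradicting level-$k$ injectivity at $x$. If $k$ is even, I would instead use the walks $y_1 \to x \to y_1 \to x \to \cdots \to y_1$ and $y_1 \to x \to y_2 \to x \to y_2 \to \cdots \to y_2$ of length $k$ to place $y_1, y_2 \in N_G^k(y_1)$, contradicting level-$k$ injectivity at $y_1$. Either way, level-1 bijectivity follows.

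With level-1 bijectivity in hand, induction on walk length yields the classical unique path-lifting property, which immediately gives surjectivity of every $p|_{N_G^i(x)}$ onto $N_H^i(p(x))$. For injectivity at level $i \le k$, take distinct $y_1, y_2 \in N_G^i(x)$ sharing an image $z$. Since each $y_j$ has a neighbor (namely the predecessor along a length-$i$ walk from $x$), $N_H(z)$ is nonempty; fix $z' \in N_H(z)$ and let $y_j' \in N_G(y_j)$ be the unique preimage of $z'$. Concatenating a length-$i$ walk from $x$ to $y_j$ with the oscillation $y_j \to y_j' \to y_j \to y_j' \to \cdots$ of length $k - i$ produces a length-$k$ walk ending at $y_j$ (if $k - i$ is even) or at $y_j'$ (if $k - i$ is odd). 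The even case directly contradicts level-$k$ injectivity at $x$; in the odd case, were $y_1' = y_2'$, then this common vertex would admit $y_1, y_2$ as distinct preimages of $z$ under $p|_{N_G(y_1')}$, violating level-1 bijectivity, so $y_1' \ne y_2'$ and level-$k$ injectivity at $x$ fails again.

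The main obstacle throughout is the parity bookkeeping: alternating walks naturally return to their starting vertex, so whenever the parities of $k$ and the desired endpoint conflict, one must either relocate the basepoint (as in the even-$k$ case of the first upgrade) or invoke level-1 bijectivity a second time to exclude a collision of extended endpoints (as in the odd-$(k-i)$ case of the second upgrade). Ordering the upgrades so that level-1 bijectivity is secured first is essential, since the second upgrade genuinely depends on it.
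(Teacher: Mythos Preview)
The paper does not supply a proof of this lemma; it is quoted directly from \cite{MatsushitaJMSUT} with no argument given, so there is no in-paper proof to compare against. That said, your proposal is a correct and self-contained proof of the lemma. The two-stage structure---first recovering level-$1$ bijectivity from level-$k$ injectivity via alternating walks (with the basepoint shifted to $y_1$ when $k$ is even), then padding each intermediate level-$i$ collision out to length $k$ by oscillating along a freshly lifted edge---is the natural route. The parity bookkeeping is handled cleanly: in the odd-$(k-i)$ case you correctly fall back on the already-secured level-$1$ injectivity at the common vertex $y_1' = y_2'$ to force a contradiction, and you rightly note that this dependence dictates the order of the two upgrades. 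The only implicit points are that $i = 0$ is vacuous because $N_G^0(x) = \{x\}$ is a singleton, and that in the even-$k$ case of the first upgrade one needs $k \ge 2$, which is automatic for even $k \ge 1$; both are harmless.
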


\begin{lemma} \label{lemma short walk}
Let $\gamma \colon P_{2i} \to G$ be a closed walk of $(G,v)$ such that $i \le k$. Then $\gamma \simeq_ k *$. Here, $*$ denotes the trivial walk $P_0 \to G$ sending $0$ to $v$.
\end{lemma}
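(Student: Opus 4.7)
The plan is to translate the problem into topology via the isomorphism $\Psi \colon \pi_1^k(G, v) \xrightarrow{\cong} \pi_1(X_k(G), v)$ from Theorem~\ref{theorem X_k}, under which $[\gamma]_k$ corresponds to the homotopy class of the loop $\hat\gamma$ in $X_k(G)$. Establishing $\gamma \simeq_k *$ then reduces to exhibiting a null-homotopy of $\hat\gamma$ in $X_k(G)$.

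I would first dispose of the degenerate small cases. When $i = 0$, the walk $\gamma$ is itself the trivial walk $*$. When $i = 1$, we have $\gamma = (v, w, v)$ for some neighbor $w$ of $v$, and a single application of relation (A) (with $m = 0$, $n = 2$, $j = 0$, so that $x_0 = y_0 = v$ and $x_0 = y_2 = v$) gives $\gamma \simeq_k *$ directly.

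For the main case $i \ge 2$, the point is that since $\gamma(0) = \gamma(2i) = v$, the walk $\gamma \colon P_{2i} \to G$ descends along the identification of endpoints to a graph homomorphism $\tilde\gamma \colon C_{2i} \to G$. Because $i \le k$, the construction of $X_k(G)$ in Theorem~\ref{theorem X_k} attaches a $2$-cell to $G$ along $\tilde\gamma$. The characteristic map of this $2$-cell is an extension of $\hat\gamma$ to a continuous map $D^2 \to X_k(G)$, i.e., a null-homotopy of $\hat\gamma$. Consequently $[\hat\gamma] = 0$ in $\pi_1(X_k(G), v)$, and applying $\Psi^{-1}$ yields $[\gamma]_k = 0$, that is, $\gamma \simeq_k *$.

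The principal obstacle one might anticipate is the construction of an explicit sequence of moves under (A) and (B)$_k$ that contracts $\gamma$, which appears quite involved for large $i$ because relation (B)$_k$ permits fewer than $k$ coordinate changes at a time while the walk to be modified may have up to $2i - 1 \le 2k - 1$ non-fixed positions. The virtue of Theorem~\ref{theorem X_k} is precisely that it bypasses this combinatorial bookkeeping: the $2$-cells of $X_k(G)$ were introduced exactly to make short closed walks $k$-null-homotopic, so no further work beyond invoking the isomorphism is required.
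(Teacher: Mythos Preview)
Your argument via Theorem~\ref{theorem X_k} is correct: for $i \ge 2$ the closed walk $\gamma$ does descend to a graph homomorphism $C_{2i} \to G$, a $2$-cell of $X_k(G)$ is attached along it since $i \le k$, and the characteristic map furnishes a null-homotopy of $\hat\gamma$, so $\Psi^{-1}$ gives $[\gamma]_k = 0$. The small cases $i = 0, 1$ are handled as you say.

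The paper, however, takes a much more direct combinatorial route and shows that your anticipated ``principal obstacle'' is illusory. Define $\gamma'$ by keeping $\gamma$ on $\{0,\ldots,i\}$ and setting $\gamma'(x) = \gamma(2i-x)$ for $x \ge i$; then $\gamma$ and $\gamma'$ differ on at most the $i-1 < k$ positions $i+1,\ldots,2i-1$, so a \emph{single} application of (B)$_k$ gives $\gamma \simeq_k \gamma'$. Since $\gamma'$ is a palindrome (a path followed by its reverse), repeated applications of (A) collapse it to $*$. So the elementary contraction is a two-line argument, not the involved bookkeeping you feared. Your topological proof is valid but imports Theorem~\ref{theorem X_k}, which is a substantially heavier external result; the paper's proof is self-contained from the definitions of (A) and (B)$_k$ and would remain available even in contexts where one has not yet established (or does not wish to invoke) the isomorphism $\Psi$.
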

\begin{proof}
Define $\gamma' \colon P_{2i} \to G$ by
\[ \gamma'(x) = \begin{cases}
\gamma(x) & (x \le i) \\
\gamma(2i - x) & (x \ge i).
\end{cases}\]
Then $\gamma \simeq_k \gamma'$ by condition (B)$_k$ and it is clear that $\gamma' \simeq_k *$ by condition (A).
\end{proof}

The following proposition will be used in the subsequent section. Indeed, this proposition was essentially shown in \cite{MatsushitaJMSUT}, but here we provide its proof for the reader's convenience.

\begin{proposition} \label{proposition lifting}
Let $p \colon G \to H$ be a $k$-covering map, and $v_0$ a vertex of $G$.
\begin{enumerate}[(1)]
\item Let $\gamma$ be a walk in $H$ such that $\gamma(0) = p(v_0)$. Then there is a unique walk $\tgamma$ of $G$ such that $\tgamma(0) = v_0$ and $p \circ \tgamma = \gamma$. We call such $\tgamma$ the \emph{lift} of $\gamma$ starting at $v_0$.

\item Let $w_0 \in V(H)$ and let $\gamma, \gamma' \in \Omega(p(v_0), w_0)$. Let $\tgamma$ and $\tgamma'$ be the lifts of $\gamma$ and $\gamma'$ starting at $v_0$, respectively. If $\gamma \simeq_k \gamma'$, then the terminal points of $\tgamma$ and $\tgamma'$ coincide, and $\tgamma \simeq_k \tgamma'$.
\end{enumerate}
\end{proposition}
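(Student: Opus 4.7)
The plan is to prove (1) by a straightforward induction on the length of $\gamma$, then to establish an auxiliary lemma that any closed walk in $H$ of length at most $2k$ lifts to a closed walk in $G$, and finally to use this lemma to verify that each of the two generating relations (A) and (B)$_k$ of $\simeq_k$ is preserved under lifting, which yields (2).

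For (1), I induct on $n = \mathrm{length}(\gamma)$, the case $n = 0$ being immediate with $\tgamma(0) = v_0$. For $n \ge 1$, the inductive hypothesis provides a unique lift $\tgamma_{n-1}$ of $\gamma|_{P_{n-1}}$ starting at $v_0$. Since a $k$-covering is in particular a $1$-covering, the restriction $p|_{N(\tgamma_{n-1}(n-1))} \colon N(\tgamma_{n-1}(n-1)) \to N(\gamma(n-1))$ is a bijection, so $\gamma(n) \in N(\gamma(n-1))$ has a unique preimage $\tgamma(n)$ adjacent to $\tgamma_{n-1}(n-1)$; extending $\tgamma_{n-1}$ by $\tgamma(n)$ gives the required lift, and uniqueness follows step by step.

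The closing-up lemma is the crux. Given a closed walk $\delta$ in $H$ at $v$ of length $2r \le 2k$ and its lift $\tilde\delta$ starting at $\tilde v$ with $p(\tilde v) = v$, the subwalk $\tilde\delta|_{[0,r]}$ shows $\tilde\delta(r) \in N^r(\tilde v)$, whereas reversing $\tilde\delta|_{[r,2r]}$ shows $\tilde\delta(2r) \in N^r(\tilde\delta(r))$; equivalently, both $\tilde v$ and $\tilde\delta(2r)$ lie in $N^r(\tilde\delta(r))$ and are sent to $v$ by $p$. Since $r \le k$, the injectivity of $p|_{N^r(\tilde\delta(r))}$ yields $\tilde\delta(2r) = \tilde v$, so $\tilde\delta$ is closed.

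For (2), I verify preservation of the two generating moves and then chain them. For (A), the walks agree up to index $j$, so their lifts also agree up to $j$; then $\tgamma'(j+1)$ is the unique preimage of $y_{j+1}$ adjacent to $\tgamma(j)$, and the bijection $p|_{N(\tgamma'(j+1))}$ sends $\tgamma(j)$ to $x_j = y_{j+2}$, forcing $\tgamma'(j+2) = \tgamma(j)$; the remainder of $\tgamma'$ then tracks $\tgamma$ shifted by two, yielding an (A)-move in $G$ with a common terminal vertex. For (B)$_k$, let $S = \{i \colon \gamma(i) \ne \gamma'(i)\}$ with $|S| < k$; I claim $\tgamma(i) = \tgamma'(i)$ for every $i \notin S$, by induction on $i$. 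Given $i \notin S$ with $i \ge 1$, let $a < i$ be the largest index outside $S$; then $\tgamma(a) = \tgamma'(a)$ by induction, and since the indices $a+1, \dots, i-1$ all lie in $S$ one has $i - a \le k$. The concatenation $\gamma|_{[a,i]} \cdot \overline{\gamma'|_{[a,i]}}$ is a closed walk at $\gamma(a)$ of length $2(i-a) \le 2k$; lifting the first half from $\tgamma(a)$ gives $\tgamma|_{[a,i]}$ ending at $\tgamma(i)$, and the closing-up lemma forces the lift of the second half starting at $\tgamma(i)$ to end at $\tgamma'(a)$. Reversing, the unique lift of $\gamma'|_{[a,i]}$ starting at $\tgamma'(a)$ ends at $\tgamma(i)$; since this lift is $\tgamma'|_{[a,i]}$ by definition, we conclude $\tgamma'(i) = \tgamma(i)$. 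Thus $\tgamma$ and $\tgamma'$ share endpoints and differ only on $S$, so $\tgamma \simeq_k \tgamma'$ via (B)$_k$. A general equivalence $\gamma \simeq_k \gamma'$ is a finite chain of (A)- and (B)$_k$-moves, and applying these two cases along the chain finishes the proof. The main obstacle is the closing-up lemma; once it is in hand, the rest is bookkeeping.
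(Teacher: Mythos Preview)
Your proof is correct and follows essentially the same strategy as the paper: reduce (2) to the generating moves (A) and (B)$_k$, handle (A) by the usual covering-space argument, and for (B)$_k$ exploit the injectivity of $p|_{N^r}$ for $r \le k$ to show that whenever $\gamma(i) = \gamma'(i)$ the lifts also agree at $i$. The only organizational difference is that the paper dispenses with your closing-up lemma: taking $i_0$ minimal with $\gamma(i_0) = \gamma'(i_0)$ but $\tgamma(i_0) \ne \tgamma'(i_0)$, it locates $j < i_0$ with $i_0 - j \le k$ and $\tgamma(j) = \tgamma'(j)$, and then observes directly that $\tgamma(i_0), \tgamma'(i_0) \in N^{i_0-j}(\tgamma(j))$ have the same image under $p$, contradicting injectivity---so the concatenation-and-reversal detour is unnecessary.
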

\begin{proof}
The statement (1) is easily checked and is actually a well-known fact of covering space theory (see \cite[Section~1.1]{Hatcher}, for example). Thus, we omit the detailed proof. We now show (2). It suffices to show the case that $\gamma$ and $\gamma'$ satisfy (A) or (B)$_k$ in Subsection~\ref{subsection k-fundamental group}. The case that $\gamma$ and $\gamma'$ satisfy (A) is easily checked and follows from the theory of covering maps. Consider the case that $\gamma$ and $\gamma'$ satisfy (B)$_k$. It suffices to see that in this case $\gamma(i) = \gamma'(i)$ implies $\tgamma(i) = \tgamma'(i)$. Suppose that there is $i$ such that $\gamma(i) = \gamma'(i)$ but $\tgamma(i) \ne \tgamma'(i)$. Take $i_0$ to be the  minimum such $i$. If $i_0 \le k$, then $\tgamma(i_0), \tgamma'(i_0) \in N^{i_0}(v_0)$ and $p(\tgamma(i_0)) = p(\tgamma'(i_0))$. Since $p$ is a $k$-covering, this is a contradiction. Hence we have $i_0 > k$. Since $\gamma$ and $\gamma'$ satisfy (B)$_k$, there is $j < i_0$ such that $\gamma(j) = \gamma'(j)$ and $i_0 - j < k$. By the definition of $i_0$, we have $\tgamma(j) = \tgamma'(j)$. Hence we have $\tgamma(i_0), \tgamma'(i_0) \in N^{i_0 - j}(\tgamma(j))$. Since $p$ is a $k$-covering, this means $\tgamma(i_0) = \tgamma'(i_0)$, and this is again a contradiction. This completes the proof.
\end{proof}

\section{Proof of the main theorem} \label{section Proof}

The goal of this section is to prove Theorem~\ref{theorem main}.

\subsection{Borsuk graphs} \label{subsection Borsuk graph}

While there are several equivalent formulations of circular colorings and circular chromatic numbers (see \cite{Zhu}), in this paper we use the following formulation, using the Borsuk graph.

In this paper, we regard $S^1$ as $\RR / \ZZ$, and consider the metric $d$ on $S^1$ induced by the Euclidean metric: For $\alpha, \beta \in S^1 = \RR / \ZZ$, set
\[ d(\alpha, \beta) = \min \{ |b-a| \mid a \in \alpha, b \in \beta \}.\]
For a positive number $a$, Define the \emph{Borsuk graph $\Bor(S^1; a)$} as follows:
\[ V(\Bor(S^1 ; a)) = S^1,\quad E(\Bor(S^1 ; a)) = \{ (x,y) \in S^1 \mid d(x,y) \ge a \}.\]

\begin{definition} \label{definition circular chromatic number}
Let $G$ be a graph. Define the circular chromatic number $\chi_c(G)$ as follows:
\[ \chi_c(G) = \inf \{ r \ge 1 \mid \textrm{there is a graph homomorphism from $G$ to $\Bor(S^1 ; r^{-1})$}\}\]
\end{definition}

The main task of the proof of Theorem~\ref{theorem main} is to show the following theorem, which determines the $k$-fundamental group of $\Bor(S^1; r^{-1})$ for every $k \ge 1$.

\begin{theorem} \label{theorem pi_1}
Let $r$ be a real number greater than $2$, and set $q = \left\lceil \frac{2}{r - 2} \right\rceil $. Then, for an integer $k \ge 2$, we have
\[ \pi_1^k (\Bor(S^1 ; r^{-1})) \cong \begin{cases}
\ZZ & (k \le q) \\
\ZZ / 2 & (k > q).
\end{cases}\]
\end{theorem}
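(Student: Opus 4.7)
My plan is to compute $\pi_1^i(\Bor(S^1; r^{-1}))$ by analyzing the iterated neighborhoods of vertices, defining a winding-number invariant, and exhibiting an explicit $C_{2(k+1)}$-relation that causes the $\ZZ \to \ZZ/2$ transition.

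\emph{Step 1 (neighborhoods).} I would first prove by induction on $j$ that the $j$-step neighborhood $N^j(x) \subseteq S^1$ is a closed arc of length $L_j := j(1 - 2r^{-1})$, centered at $x + j/2 \pmod 1$, whenever $L_j \le 1$, and is all of $S^1$ otherwise. Using $k = \lceil 2/(r-2) \rceil$, a direct estimate gives $L_k < 1$ and $L_{k+1} \ge 1$, so $N^j(x)$ is a proper arc of $S^1$ precisely when $j \le k$.

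\emph{Step 2 (winding invariant).} For a closed walk $\gamma = (x_0, \ldots, x_n)$, define $w(\gamma) \in \ZZ$ by lifting to $\widetilde{x}_0, \ldots, \widetilde{x}_n \in \RR$ with each $\widetilde{x}_{j+1} - \widetilde{x}_j \in [-1/2, -r^{-1}] \cup [r^{-1}, 1/2]$ (fixing a consistent convention at antipodal jumps) and setting $w(\gamma) := \widetilde{x}_n - \widetilde{x}_0$. The key technical claim is that $w$ descends to $\pi_1^i(\Bor(S^1; r^{-1}))$ for $i \le k$. Invariance under relation (A) is immediate from the unique lifting of backtracked edges. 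For relation (B)$_i$, when fewer than $i$ positions of the walk are modified, the altered sub-chain connects two fixed vertices and, by Step~1, the possible lifts are confined to an interval of $\RR$ projecting into the proper arc $N^{i-1}$ of the fixed neighboring vertices. This forces the original and new lifted endpoints to coincide, so $w$ is unchanged.

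\emph{Step 3 (generators and transition).} For $i \le k$, a direct construction using jumps close to $r^{-1}$ yields an odd-length winding-$1$ closed walk $\gamma_0$ at the basepoint (exploiting the non-bipartiteness of $\Bor(S^1; r^{-1})$ for $r > 2$); combined with Step~2, $w \colon \pi_1^i(\Bor) \to \ZZ$ is an isomorphism. For $i > k$, I would construct an explicit graph homomorphism $\eta \colon C_{2i} \to \Bor(S^1; r^{-1})$ of winding $2$: a direct count of admissible jump sign patterns (for instance, with $p$ positive and $2i - p$ negative jumps for suitable $p$ in a nonempty range determined by $r > 2$) produces such $\eta$ for every $i > k$. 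By Theorem~\ref{theorem X_k}, attaching a $2$-cell along $\eta$ in $X_i(\Bor)$ trivializes $\gamma_0^2$ in $\pi_1^i(\Bor) = \pi_1(X_i(\Bor))$, so $\pi_1^i(\Bor)$ is a quotient of $\ZZ/2$. Combined with the length-parity surjection $\pi_1^i \twoheadrightarrow \ZZ/2$ (non-bipartiteness), this forces $\pi_1^i(\Bor) \cong \ZZ/2$.

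The main obstacle is the $\ZZ$-valued winding invariance under (B)$_i$ in Step~2. The proper-arc property $L_{i-1} < 1$ (valid for $i \le k$) is essential, and its failure at $i = k+1$ — where $N^{k+1}$ becomes all of $S^1$ and a local modification of $k$ positions can produce a full rotation — exactly matches the sharp transition $\ZZ \to \ZZ/2$. Careful bookkeeping of the lifting conventions at antipodal jumps is required, and the fact that the intersection $N(x_{j-1}) \cap N(x_{j+1})$ in $\Bor(S^1; r^{-1})$ is a single arc (since each $N(\cdot)$ has length less than $1/2$) is used repeatedly to ensure the lifts can be compared without ambiguity.
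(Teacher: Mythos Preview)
There is a genuine gap in Step~2: the winding number you define --- by lifting each edge to a jump in $[-\tfrac{1}{2}, -r^{-1}] \cup [r^{-1}, \tfrac{1}{2}]$ --- is \emph{not} invariant under relation (B)$_i$, even when $i \le k$. Take $r = 3$ (so $k = 2$) and consider the closed walks $\gamma = (0,\, 0.45,\, 0.9,\, 0.45,\, 0)$ and $\gamma' = (0,\, 0.55,\, 0.9,\, 0.45,\, 0)$ in $\Bor(S^1; \tfrac{1}{3})$; they differ only at position $1$, hence are (B)$_2$-related. Your lift of $\gamma$ is $0,\, 0.45,\, 0.9,\, 0.45,\, 0$ (winding $0$), while your lift of $\gamma'$ is $0,\, -0.45,\, -0.1,\, -0.55,\, -1$ (winding $-1$). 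The reason your heuristic fails is that a single lifted step can go left or right by nearly $\tfrac{1}{2}$, so the set of points in $\RR$ reachable from $\tilde{x}_{j'}$ in $m$ lifted steps is a \emph{disconnected} union of intervals, not one interval projecting to the arc $N^m(x_{j'})$; a one-vertex modification can flip the lift between branches. Your claim that ``$N(x_{j-1}) \cap N(x_{j+1})$ is a single arc since each $N(\cdot)$ has length less than $\tfrac{1}{2}$'' is also false once $r \ge 4$.

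The paper circumvents this by lifting to a different cover, $\tBor(S^1; a) = \RR \times \{\pm 1\}$, in which an edge moves the first coordinate by at most $\tfrac{1}{2} - r^{-1}$ (rather than at least $r^{-1}$). After $m$ steps the first coordinate then lies in a single interval of length $m(1 - 2r^{-1})$, which is $<1$ exactly when $m \le k$; this is precisely the $k$-covering property, and it makes the analogue of your Step~2 go through. Your neighborhood computation in Step~1 is correct and is in fact the shadow of this bound, but the lift you chose does not realize it. A secondary gap: even granting a correct invariant, you never argue injectivity of $w$ (that winding $0$ forces $\simeq_i$-triviality); in the paper this is handled by proving that $\tBor$ has trivial $i$-fundamental group for all $i \ge 2$.
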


The proof of this theorem is postponed to the next subsection. In this subsection, we complete the proof of Theorem~\ref{theorem main}, assuming Theorem~\ref{theorem pi_1}.

\begin{theorem} \label{theorem circular 1}
Let $G$ be a connected graph and $k$ an integer at least $2$. Assume that there is a closed walk $\gamma$ in $G$ with odd length such that the element in $\pi_1^k(G) / [\pi_1^k(G), \pi_1^k(G)]$ represented by $[\gamma]_k \in \pi_1^k(G)$ is a torsion element. Then we have
\[ \chi_c(G) \ge 2 + \frac{2}{k-1}.\]
\end{theorem}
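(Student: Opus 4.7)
The plan is a contradiction argument that combines Theorem~\ref{theorem pi_1} with the parity-of-length invariant on $k$-fundamental groups. Suppose toward contradiction that $\chi_c(G) < 2 + \frac{2}{k-1}$. Since $G$ carries the odd closed walk $\gamma$, $G$ is non-bipartite and hence $\chi_c(G) > 2$. From the definition of $\chi_c$ via Borsuk graphs there then exists a real number $r$ with $2 < r < 2 + \frac{2}{k-1}$ and a graph homomorphism $f \colon G \to \Bor(S^1; r^{-1})$; let $v$ be a vertex of $\gamma$, treated as the basepoint. Rearranging $r < 2 + \frac{2}{k-1}$ gives $\frac{2}{r-2} > k-1$, so $\lceil \frac{2}{r-2} \rceil \ge k$, and Theorem~\ref{theorem pi_1} yields $\pi_1^k(\Bor(S^1; r^{-1}), f(v)) \cong \ZZ$.

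The next step is to examine the induced homomorphism $f_* \colon \pi_1^k(G,v) \to \pi_1^k(\Bor(S^1; r^{-1}), f(v)) \cong \ZZ$. Because the target is abelian, $f_*$ factors through the abelianization $\pi_1^k(G,v)/[\pi_1^k(G,v), \pi_1^k(G,v)]$. By hypothesis, the image of $[\gamma]_k$ in this abelianization is a torsion element; as $\ZZ$ is torsion-free, $f_*([\gamma]_k) = 0$.

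To derive a contradiction, I would then invoke the parity-of-length homomorphism. The map $[\delta]_k \mapsto \mathrm{length}(\delta) \bmod 2$ defines a homomorphism $\pi_1^k(\Bor(S^1; r^{-1}), f(v)) \to \ZZ/2$, and it is surjective since $\Bor(S^1; r^{-1})$ contains odd cycles (for any integer $n$ with $n \ge 1/(r-2)$, the $2n+1$ points $\{ in/(2n+1) \mid 0 \le i \le 2n \}$ on $S^1$ form such a cycle, since consecutive ones lie at distance $n/(2n+1) \ge 1/r$ and $\gcd(n,2n+1)=1$). Hence, viewed as a homomorphism from $\ZZ$, it must be the reduction modulo $2$. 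Because $f$, being a graph homomorphism, preserves walk lengths, the composite of $f_*$ with this parity map equals the parity map on $\pi_1^k(G,v)$, so evaluating at $[\gamma]_k$ gives $\mathrm{length}(\gamma) \bmod 2 = 1$ on one hand and $f_*([\gamma]_k) \bmod 2 = 0$ on the other, contradicting the odd length of $\gamma$. The real obstacle here is Theorem~\ref{theorem pi_1} itself, whose proof is postponed to the next subsection; once it is granted, the reduction just described is a short torsion-plus-parity argument in the spirit of the chromatic proof in Section~\ref{section chromatic}.
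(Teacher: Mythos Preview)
Your proof is correct and follows essentially the same route as the paper: assume a homomorphism to a Borsuk graph with small parameter, invoke Theorem~\ref{theorem pi_1} to get $\pi_1^k(\Bor(S^1;r^{-1})) \cong \ZZ$, push $[\gamma]_k$ through to obtain $0$ by the torsion/abelianization argument, and contradict parity. The paper's final step is slightly terser---it simply notes that $f\circ\gamma$ is an odd closed walk and hence $[f\circ\gamma]_k \ne 0$ via the length-parity homomorphism of Subsection~\ref{subsection k-fundamental group}, without your detour through exhibiting an odd cycle in $\Bor(S^1;r^{-1})$ and identifying the parity map on $\ZZ$ as reduction mod~$2$.
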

\begin{proof}[Proof of Theorem~\ref{theorem circular 1} under Theorem~\ref{theorem pi_1}]
Suppose that $\chi_c(G) < 2 + \frac{2}{k-1}$. Then there is $r < 2 + \frac{2}{k-1}$ such that there is a graph homomorphism $f$ from $G$ to $\Bor(S^1 ; r^{-1})$. Consider the group homomorphism $f_* \colon \pi_1^k(G) \to \pi_1^k(\Bor(S^1; r^{-1}))$. By Theorem~\ref{theorem pi_1}, $\pi_1^k(\Bor(S^1; r^{-1})) \cong \ZZ$. Since $\ZZ$ is an abelian group, $f_*$ factors through $\pi_1^k(G) / [\pi_1^k(G), \pi_1^k(G)]$. Since $[\gamma]_k$ is a torsion element in $\pi_1^k(G) / [\pi_1^k(G), \pi_1^k(G)]$, $f_*([\gamma]_k)$ is trivial. On the other hand, since $f \circ \gamma$ is a closed walk with odd length, $[f \circ \gamma]_k = f_* ([\gamma]_k)$ is not trivial (see Remark~\ref{remark parity}). This is a contradiction.
\end{proof}

Now we prove the main theorem, assuming Theorem~\ref{theorem pi_1}.

\begin{proof}[Proof of Theorem~\ref{theorem main} under Theorem~\ref{theorem pi_1}]
Set $G = X^1$. If there exist $2$-cells in $X$ whose attaching maps coincide, let $X'$ denote the $2$-dimensional complex obtained by removing all but one of them from $X^2$. By the definition of cellular homology, we have an isomorphism $H_1(X; \mathbb{Z}) \cong H_1(X'; \mathbb{Z})$, and $X'$ is a subcomplex of $X_k(X^1)$.
Let $\iota$ be the inclusion $X' \hookrightarrow X_k(X^1)$.
Since $X_k(X^1)^1 = X^1 = G$, the map $\iota_* \colon \pi_1(X') \to \pi_1(X_k(G))$ induced by $\iota$ is surjective (Proposition~\ref{proposition 2-connected}).
Recall that $\pi_1(X_k(G)) \cong \pi_1^k(G)$. Hence we obtain a natural surjective group homomorphism
\[ \pi_1(X') \to \pi_1^k(G).\]
Considering the composition of the sequence (see also Theorem~\ref{theorem abelianization})
\[
H_1(X; \mathbb{Z}) \cong H_1(X'; \ZZ) \cong \pi_1(X') / [\pi_1(X'), \pi_1(X')] \to \pi_1^k(G) / [\pi_1^k(G), \pi_1^k(G)],
\]
the homology class $[\gamma] \in H_1(X; \mathbb{Z})$ is sent to \([\gamma]_k \in \pi_1^k(G) / [\pi_1^k(G), \pi_1^k(G)]\).
Hence, \([\gamma]_k\) is a torsion element in \(\pi_1^k(G) / [\pi_1^k(G), \pi_1^k(G)]\), and the proof is completed by Theorem~\ref{theorem circular 1}.
\end{proof}

Hence, to complete the proof of Theorem~\ref{theorem main}, it suffices to show Theorem~\ref{theorem pi_1}.

Before concluding this subsection, we discuss the relationship between Theorem \ref{theorem pi_1} and existing results. Recently, Krebs and Sankar \cite[Theorem~1.4]{KrebsSankar} showed that if $G$ is a connected non-bipartite graph and the first integral homology group $H_1(\N(G) ; \ZZ)$ of $\N(G)$ is a torsion group, then $\chi(G) \ge 4$. This result can be refined by Theorem~\ref{theorem pi_1} in terms of the circular chromatic number as follows:

\begin{corollary} \label{corollary KS}
Let $G$ be a connected non-bipartite graph. If $H_1(\N(G); \ZZ)$ is a torsion group, then $\chi_c(G) \ge 4$.
\end{corollary}
\begin{proof}
By Theorems~\ref{theorem abelianization} and \ref{theorem neighborhood complex}, the group $\pi_1^2(G) / [\pi_1^2(G)_{ev}, \pi_1^2(G)_{ev}]$ includes
\[ \pi^2_1(G)_{ev} / [\pi^2_1(G)_{ev}, \pi^2_1(G)_{ev}] \cong H_1(\N(G); \ZZ)\]
as a subgroup of finite index, and hence is a torsion group. Since there is a surjective group homomorphism $\pi^2_1(G) / [\pi^2_1(G)_{ev}, \pi^2_1(G)_{ev}] \to \pi^2_1(G) / [\pi^2_1(G), \pi^2_1(G)]$, the group $\pi^2_1(G) / [\pi^2_1(G), \pi^2_1(G)]$ is also a torsion group. Thus Theorem~\ref{theorem circular 1} completes the proof.
\end{proof}

\begin{remark}
Here, we discuss the relationship between Theorem~\ref{theorem pi_1} and \v{C}ech complexes $\bC_{\le}(S^1; s)$ of a circle \cite{AA}. For $s \ge 0$, the \emph{\v{C}ech complex $\bC_{\le}(X; s)$ of a metric space $(X, d)$} is the simplicial complex whose vertex set is $X$, and where a finite subset of $X$ forms a simplex of $\bC(X; s)$ if it is contained in some closed ball of radius $s$. The \v{C}ech complex is a fundamental concept in applied topology and has been extensively studied (see, for example, \cite{Carlsson, EH}). It is straightforward to verify that $\N^j(\Bor(S^1; a))$ coincides with the \v{C}ech complex $\bC_{\le} \left(S^1; j(\frac{1}{2} - a) \right)$ of the circle.

The homotopy type of the \v{C}ech complex $\bC_{\le}(S^1; s)$ was determined by Adamaszek and Adams. According to their result \cite[Theorem~1.1]{AA}, $\bC_{\le}(S^1; s)$ is homotopy equivalent to $S^1$ for $0 < s < \frac{1}{4}$, and is simply connected for $s \ge \frac{1}{4}$. Hence, Theorem~\ref{theorem neighborhood complex} implies
\[
\pi_1^{2j}(\Bor(S^1 ; r^{-1}))_{ev} \cong \begin{cases}
\ZZ & \text{if } 2j \le q \\
0 & \text{if } 2j > q,
\end{cases}
\]
where $q = \left\lceil \frac{2}{r - 2} \right\rceil $. One can deduce Theorem~\ref{theorem pi_1} from this computation when $k$ is even. In the next subsection, we give a proof valid for both even and odd $k$.
\end{remark}

\subsection{Proof of Theorem~\ref{theorem pi_1}}

The goal of this section is to prove Theorem~\ref{theorem pi_1}, which determines the $k$-fundamental groups of $\Bor(S^1; r^{-1})$.

Throughout this section, symbols $r$, $a$ and $q$ frequently appear to indicate real numbers. The relations between these numbers are as follows: $r$ is a real number greater than $2$, $a = r^{-1}$ and $q = \left\lceil \frac{2}{r-2} \right\rceil$. Then, note that $0 < a < 1/2$.
To determine $\pi_1^k(\Bor(S^1; a))$, we construct the ``universal covering" of $\tBor(S^1; a)$ as follows:

Suppose that $0 < a < 1/2$. Define the graph $\tBor(S^1; a)$ by
\[ V(\tBor(S^1; a)) = \RR \times \{ \pm 1\},\]
\[ E(\tBor(S^1; a)) = \left\{ \left((x,\varepsilon), (x', \varepsilon') \right) \mid |x - x'| \le \frac{1}{2} - a,\ \varepsilon' = - \varepsilon \right\}.\]
In the following, we often write $\tB_a$ and $B_a$ instead of $\tBor(S^1 ; a)$ and $\Bor(S^1; a)$, respectively.

\begin{lemma} \label{lemma bipartite}
The graph $\tB_a = \tBor(S^1; a)$ is bipartite for $0 < a < 1/2$.
\end{lemma}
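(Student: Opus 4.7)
The plan is to exhibit an explicit bipartition witnessing that $\tB_a$ has no odd cycle. Concretely, I would partition the vertex set as
\[
V(\tB_a) = (\RR \times \{+1\}) \sqcup (\RR \times \{-1\}),
\]
and observe that by the very definition of the edge set, the condition $\varepsilon' = -\varepsilon$ forces every edge $\{(x,\varepsilon),(x',\varepsilon')\}$ to have exactly one endpoint in each part. This immediately shows that $\tB_a$ is bipartite.

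There is essentially no obstacle: the bipartition is built into the definition. The only thing to double-check is that the construction is nontrivial, i.e.\ that the edge set is nonempty for $0 < a < 1/2$ (otherwise the statement is vacuous but still true), which follows because $\frac{1}{2} - a > 0$, so for any $x \in \RR$ the pair $((x,+1),(x,-1))$ is an edge. I would phrase the proof as a single short paragraph, with the map $\tB_a \to K_2$ defined by $(x,\varepsilon) \mapsto \varepsilon$ as the explicit $2$-coloring.
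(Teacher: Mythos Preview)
Your proposal is correct and matches the paper's proof essentially verbatim: the paper defines $f \colon \tB_a \to K_2$ by $f(x,1)=1$ and $f(x,-1)=2$, which is exactly your $2$-coloring by the second coordinate. The extra remark about nonemptiness of the edge set is harmless but not needed.
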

\begin{proof}
Define $f \colon \tB_a \to K_2$ by $f(x,1) = 1$ and $f(x, - 1) = 2$ for every $x \in \RR$.
\end{proof}

Define the map $p \colon V(\tBor(S^1 ; a)) \to V(\Bor(S^1 ; a))$ by
\[ p(x, 1) = x \mod 1 \quad \textrm{and} \quad p(x, -1) = x + \frac{1}{2} \mod 1.\]

\begin{lemma} \label{lemma surjective}
If $0 < a < 1/2$, then for every $x \in V(\tBor(S^1; a))$, we have $p(N_{\tB_a}(x)) = N_{B_a}(p(x))$. In particular, $p$ is a graph homomorphism.
\end{lemma}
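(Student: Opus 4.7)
The plan is to establish both inclusions $p(N_{\tB_a}(x)) \subseteq N_{B_a}(p(x))$ and $N_{B_a}(p(x)) \subseteq p(N_{\tB_a}(x))$ by direct computation from the definitions. A useful preliminary observation is that the involution $\sigma \colon (x,\varepsilon) \mapsto (x + 1/2, -\varepsilon)$ is an automorphism of $\tB_a$ satisfying $p \circ \sigma = p$, so it suffices to treat the case $x = (x_0, 1)$; the case $x = (x_0, -1)$ then follows by applying $\sigma$.

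For the forward inclusion, which in particular gives that $p$ is a graph homomorphism, I would take an arbitrary neighbor of $(x_0,1)$ in $\tB_a$, which must be of the form $(y, -1)$ with $|x_0 - y| \le 1/2 - a$. Since $p(x_0, 1) = x_0 \bmod 1$ and $p(y, -1) = (y + 1/2) \bmod 1$, the goal is to show $d(x_0 \bmod 1, (y+1/2) \bmod 1) \ge a$. Setting $t = x_0 - y - 1/2$, the hypothesis forces $t \in [-1 + a, -a]$, and then $\min_{n \in \ZZ} |t - n|$ is realized at $n = 0$ or $n = -1$, both of which give a value at least $a$.

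For the reverse inclusion, given $\alpha \in N_{B_a}(p(x))$ I would pick the representative $\tilde\alpha \in \RR$ of $\alpha$ that realizes $d(\alpha, x_0 \bmod 1)$, so that $|x_0 - \tilde\alpha| \in [a, 1/2]$. Depending on the sign of $x_0 - \tilde\alpha$, I would set $y := \tilde\alpha + 1/2$ or $y := \tilde\alpha - 1/2$; the choice is arranged so that $|x_0 - y| = 1/2 - |x_0 - \tilde\alpha| \in [0, 1/2 - a]$. Then $(y, -1)$ is a neighbor of $(x_0, 1)$ in $\tB_a$, and $p(y, -1) = (y + 1/2) \bmod 1 = \tilde\alpha \bmod 1 = \alpha$, finishing the reverse inclusion.

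There is no deep conceptual obstacle in this argument; the only thing to watch is the bookkeeping with representatives modulo $1$ and the selection of the correct sign in the shift $y = \tilde\alpha \pm 1/2$. The key estimate in both directions is the elementary identity that, under the adjacency condition $|x_0 - y| \le 1/2 - a$ on the two sheets, the $S^1$-distance between $x_0$ and $y + 1/2$ modulo $\ZZ$ is exactly $1/2 - |x_0 - y|$, so the interval $[0, 1/2 - a]$ of possible $|x_0 - y|$ corresponds bijectively to the interval $[a, 1/2]$ of possible distances in $B_a$.
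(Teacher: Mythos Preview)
Your proposal is correct and follows essentially the same approach as the paper: both argue by direct computation from the definitions of $\tB_a$, $B_a$, and $p$. The paper computes the image set $p(N_{\tB_a}(x,1))$ in one sweep via the substitution $y \mapsto y + \tfrac{1}{2}$ and recognizes it as $N_{B_a}(x \bmod 1)$, whereas you split into two inclusions and track representatives explicitly; your reduction of the $(x_0,-1)$ case via the automorphism $\sigma$ is a slightly cleaner version of the paper's ``similar and straightforward.''
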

\begin{proof}
Let $(x,1) \in V(\tB_a)$. Then
\[ N_{\tB_a}(x,1) = \Big\{ (y, -1) \mid - \Big( \frac{1}{2} - a \Big) \le y - x  \le  \frac{1}{2} - a \Big\}.\]
Hence we have
\begin{eqnarray*}
p(N_{\tB_a}(x,1)) &=& \Big\{ \ y + \frac{1}{2} \hspace{-1mm}\mod 1 \mid - \Big( \frac{1}{2} - a \Big) \le y - x  \le \frac{1}{2} - a \Big\} \\
&=& \big\{ \ y \hspace{-1mm}\mod 1 \mid a \le y - x \le 1 - a \big\} \\
&=& N_{B_a}\big( x \hspace{-1mm}\mod 1 \big)\\
&=& N_{B_a}(p(x,1)).
\end{eqnarray*}
The case of $(x,-1)$ is similar and straightforward, and we omit the details.
\end{proof}

\begin{lemma} \label{lemma simply-connected}
Suppose $0 < a < 2^{-1}$. Then $\pi_1^k(\tBor(S^1; a))$ is trivial for every $k \ge 2$.
\end{lemma}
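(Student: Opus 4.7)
The plan is first to observe that it suffices to prove $\pi_1^2(\tB_a) = 0$: for any $i \ge 2$, relation $(B)_i$ contains $(B)_2$ as a sub-relation (having at most one differing position is certainly fewer than $i$), so $\simeq_2$ is contained in $\simeq_i$, yielding a natural surjection $\pi_1^2(\tB_a) \twoheadrightarrow \pi_1^i(\tB_a)$. Hence triviality at level $2$ forces triviality at every level $i \ge 2$; this is what makes a single combinatorial argument using only $(B)_2$-moves valid for both even and odd $i$.

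To prove $\pi_1^2(\tB_a) = 0$, I would fix a closed walk $\gamma = (v_0, v_1, \ldots, v_n)$ based at $v_0 = (0, 1)$, write $v_j = (x_j, \varepsilon_j)$, and set $c := \tfrac{1}{2} - a$. By Lemma~\ref{lemma bipartite} the length $n$ is even. The plan is to show $\gamma \simeq_2 *$ by induction on $n$: the base $n = 2$ is a back-and-forth, removed by a single $(A)$-move. For the inductive step ($n \ge 4$) the goal is to reduce $\gamma$ to a closed walk of length $n - 2$ in two phases. An \emph{amplitude-reduction} phase uses $(B)_2$-moves to bring every $|x_j|$ down to at most $c$, and a \emph{length-reduction} phase then uses one further $(B)_2$-move to replace $v_{n-2}$ by $v_0$ and an $(A)$-move to remove the resulting back-and-forth $(v_0, v_{n-1}, v_0)$.

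For the amplitude-reduction phase, let $M$ be the smallest positive integer with $\max_j |x_j| \le Mc$. Sweeping $j = 1, 2, \ldots, n-1$ in order, whenever the current $|x_j| > (M-1)c$ I would apply the $(B)_2$-move replacing $v_j$ by $(\mathrm{sign}(x_j)(M-1)c,\, \varepsilon_j)$. The key feasibility observation is that if $x_j > (M-1)c$ and $|x_{j\pm 1} - x_j| \le c$, then for $M \ge 2$ both $x_{j-1}$ and $x_{j+1}$ lie in $((M-2)c, Mc]$; in particular they share the sign of $x_j$, and $(M-1)c$ is within $c$ of each, so the single-vertex replacement preserves adjacency even with a neighbor that may have been updated earlier in the sweep. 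After one sweep $\max_j |x_j| \le (M-1)c$, and iterating $M - 1$ sweeps yields $\max_j |x_j| \le c$. In the length-reduction phase that follows, $v_0 = (0, 1)$ is adjacent to both $v_{n-3}$ (since $|x_{n-3}| \le c$) and $v_{n-1}$, so the $(B)_2$-move $v_{n-2} \mapsto v_0$ is valid and the resulting back-and-forth is eliminated by $(A)$.

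The main obstacle is the feasibility analysis of the amplitude-reduction sweep, specifically the ``same-sign'' claim that a coordinate with $|x_j| > (M-1)c$ cannot neighbor a coordinate of the opposite sign. This relies crucially on the walk constraint $|x_{j\pm 1} - x_j| \le c$ and ensures that the sequential single-vertex replacements never create a jump exceeding $c$. Once this is in place, finite termination is immediate: initially $M \le \lceil n/2 \rceil$, each sweep uses at most $n-1$ elementary $(B)_2$-moves, and the outer induction reduces $n$ by $2$ per step, so $\gamma$ contracts to the trivial walk in finitely many moves.
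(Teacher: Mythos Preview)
Your proposal is correct and follows essentially the same approach as the paper: both first reduce to $i=2$ via the surjection $\pi_1^2(\tB_a)\twoheadrightarrow\pi_1^i(\tB_a)$, and then contract an arbitrary closed walk by using $(B)_2$-moves to shrink the first coordinate, the key point being that a vertex at (or above) the current amplitude can be pulled inward by $c=\tfrac12-a$ without breaking adjacency because its neighbours are no farther out. The paper modifies a single maximizer and iterates until every first coordinate is $0$ (after which the walk is a pure back-and-forth), while you organise the same manoeuvre as a sweep plus an outer induction on length; this is a bookkeeping difference rather than a different idea.
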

\begin{proof}
Since there is a natural surjection $\pi_1^2(\tBor(S^1; a)) \to \pi_1^k(\tBor(S^1;a))$, it suffices to show that $\pi_1^2(\tBor(S^1; a))$ is trivial. Set $v = (0,1) \in V(\tBor(S^1; a))$ as a basepoint of $\tB_a$.

Let $\gamma$ be a closed walk whose basepoint is $v$, and suppose that the length $n$ of $\gamma$ is greater than $0$. Define $\pr_1 \colon V(\tBor(S^1; a)) \to \RR$ by $\pr_1(x, \varepsilon) = x$. Let $M(\gamma)$ be the maximum of $\{ |\pr_1(\gamma(0))|, \cdots, |\pr_1(\gamma({\rm length}(\gamma)))|\}$. It is clear that if $M(\gamma) = 0$, then $[\gamma]_2$ is trivial.

Set $b = \frac{1}{2} - a$. Let $i \in \{ 0, \cdots, n\}$ such that $|\pr_1(\gamma(i))| = M(\gamma)$. Define $\gamma'$ as follows:
\begin{itemize}
\item Suppose $\gamma(i) \ge b$. Then define $\gamma'$ by
\[ \left( \gamma(0), \cdots, \gamma(i-1), \gamma(i) - b, \gamma(i+1), \cdots, \gamma(n) \right).\]

\item Suppose $\gamma(i) \le -b$. Then define $\gamma'$ by
\[ \left( \gamma(0), \cdots, \gamma(i-1), \gamma(i) + b, \gamma(i+1), \cdots, \gamma(n)\right).\]

\item $-b \le \gamma(i) \le b$. Then define $\gamma'$ by
\[ \left(\gamma(0), \cdots, \gamma(i-1), 0, \gamma(i+1), \cdots, \gamma(n)\right).\]
\end{itemize}
By the definition, we have $\gamma \simeq_2 \gamma'$. Repeating the modifications above, we have a closed walk $\gamma''$ such that $M(\gamma'') = 0$. This completes the proof.
\end{proof}

Define a graph homomorphism $f \colon \tBor(S^1 ; a) \to \tBor(S^1 ; a)$ by
\[ f(x, \varepsilon) = \left(x + \frac{1}{2}, -\varepsilon\right).\]
Then $f$ is an isomorphism of graphs, and
\[ f^n(x, \varepsilon) = \left(x + \frac{n}{2}, (-1)^n \varepsilon\right).\]

\begin{theorem} \label{theorem pi_1a}
Suppose that $0 < a < \frac{1}{2}$, and set $b = \frac{1}{2} - a$. Let \( q' \) be the smallest integer satisfying \( 2q'b \ge 1 \), that is, \( q' = \left\lceil \frac{1}{2b} \right\rceil \).
Then the graph homomorphism $p \colon \tBor(S^1 ; a) \to \Bor(S^1 ; a)$ is a \((q' - 1)\)-covering map, but not a \(q'\)-covering map.
\end{theorem}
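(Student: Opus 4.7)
The plan is to reduce the verification to Lemma~\ref{lemma checking criterion}. Since Lemma~\ref{lemma surjective} already supplies the surjectivity condition~(1) of that lemma, namely that $p|_{N_{\tB_a}(x)} \colon N_{\tB_a}(x) \to N_{B_a}(p(x))$ is surjective for every vertex $x$, the task reduces to determining, for each positive integer $i$, whether the restriction $p|_{N^i_{\tB_a}(x)}$ is injective.

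First I would compute the iterated neighborhoods $N^i_{\tB_a}(x_0, \varepsilon_0)$ explicitly. Each edge of $\tB_a$ changes the first coordinate by at most $b = \frac{1}{2} - a$ and flips the sign of the second coordinate, so an induction on $i$ should give
\[ N^i_{\tB_a}(x_0, \varepsilon_0) = \{\, (z, (-1)^i \varepsilon_0) \mid z \in [x_0 - ib,\, x_0 + ib]\,\}. \]
The extreme values are attained by walks that always move maximally in one direction, and every intermediate value is attained by an oscillating walk, so the interval is reached in its entirety.

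Next I would analyze the restriction of $p$ to this set. By the definition of $p$, the map sends $(z, (-1)^i \varepsilon_0)$ to an element of $\RR/\ZZ$ that agrees with $z \bmod 1$ up to an additive constant $\frac{1}{2}$ depending on the parity of $i$ and the value of $\varepsilon_0$. Hence injectivity of $p|_{N^i_{\tB_a}(x)}$ is equivalent to injectivity of the quotient map $\RR \to \RR/\ZZ$ on the closed interval $[x_0 - ib, x_0 + ib]$ of length $2ib$, and this holds if and only if $2ib < 1$.

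Combining these observations with the definition of $k$ finishes the proof: since $k = \lceil 1/(2b) \rceil$ is the smallest integer with $2kb \ge 1$, we have $2(k-1)b < 1$ and $2kb \ge 1$, so $p|_{N^{k-1}_{\tB_a}(x)}$ is injective for every $x$ while $p|_{N^k_{\tB_a}(x)}$ fails to be injective (the two endpoints $(x_0 - kb, (-1)^k \varepsilon_0)$ and $(x_0 + kb, (-1)^k \varepsilon_0)$ then map to the same point of $\Bor(S^1; a)$). Lemma~\ref{lemma checking criterion} then yields the conclusion. The main subtlety, and the place where one must be careful, is the borderline case $2kb = 1$: the two endpoints of the interval are genuinely distinct in $\RR$ but collapse modulo $1$, which is exactly what forces the non-strict inequality $\ge$ in the defining property of $k$.
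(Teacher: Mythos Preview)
Your approach is essentially the same as the paper's: both invoke Lemma~\ref{lemma checking criterion} together with Lemma~\ref{lemma surjective} to reduce the problem to checking injectivity of $p$ on $N^i_{\tB_a}(x)$, and both reduce this in turn to whether a real interval of length $2ib$ injects into $\RR/\ZZ$. Your explicit computation of $N^i_{\tB_a}(x_0,\varepsilon_0)$ as $\{(z,(-1)^i\varepsilon_0)\mid z\in[x_0-ib,x_0+ib]\}$ is correct and makes the argument a bit more transparent than the paper's version.

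One small inaccuracy: your parenthetical witness for non-injectivity at level $i=k$ is not right in general. The endpoints $(x_0-kb,\cdot)$ and $(x_0+kb,\cdot)$ differ by $2kb$ in the first coordinate, and $2kb$ need not be an integer, so they collapse under $p$ only in the borderline case $2kb\in\ZZ$. A correct witness is any pair in the interval differing by exactly $1$, for instance $(x_0-kb,\cdot)$ and $(x_0-kb+1,\cdot)$; the paper does essentially this by taking the midpoint of a length-$2k$ walk from $(0,1)$ to $(1,1)$ as the center $x$. Since your general criterion ``injective iff $2ib<1$'' is already correctly stated and justified, this is a cosmetic fix rather than a genuine gap.
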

\begin{proof}
We first show that $p$ is not a $q'$-covering. By the definition of $q'$, there is a walk $\tgamma \colon P_{2q'} \to \tBor(S^1; a)$ such that $\tgamma(0) = (0,1)$ and $\tgamma(2q) = (1,1)$. Then we have $(0,1), (1,1) \in N^{q'}(\tgamma(k))$ but $p\left( (0,1) \right) = p\left( (1,1)\right)$. This means that $p$ is not a $q'$-covering.

Next we show that $p$ is a $(q'-1)$-covering. Let $v = (x, \varepsilon) \in V(\tBor(S^1; a))$. By Lemmas~\ref{lemma checking criterion} and \ref{lemma surjective}, it suffices to show that $p |_{N^{q'-1}(v)} \colon N^{q'-1}(v) \to N^{q'-1}(p(v))$ is injective. Suppose that there are distinct vertices $v_0, v_1 \in N^{q'-1}(v)$ such that $p(v_0) = p(v_1)$. Then there are distinct $x_0, x_1 \in \RR$ such that $v_0 = (x_0, (-1)^{q'-1} \varepsilon)$ and $v_1 = (x_1, (-1)^{q'-1} \varepsilon)$, and $|x_1 - x_0| \ge 1$. Since $v_0, v_1 \in N^{q'-1}(v)$, there is a walk $\tgamma \colon P_{2q'-2} \to \tBor(S^1 ; a)$ such that $\tgamma(0) = v_0$ and $\tgamma(2q'-2) = v_1$. However, since $(2q' -2)b < 1$, this means that $|x_1 - x_0| < 1$. This is a contradiction.
\end{proof}

Since $2 < r$, $a = r^{-1}$, and $b = \frac{1}{2} - a$, we have
\[ q' = \left\lceil \frac{1}{2b} \right\rceil = \left\lceil \frac{2}{r - 2} \right\rceil + 1 = q + 1.\]
Therefore, Theorem~\ref{theorem pi_1a} implies the following:

\begin{corollary} \label{corollary covering}
Let $r$ be a real number greater than $2$, and set \( q = \left\lceil \frac{2}{r - 2} \right\rceil \). Then the graph homomorphism $p \colon \tBor(S^1 ; r^{-1}) \to \Bor(S^1 ; r^{-1})$ is a \(q\)-covering, but not a $(q + 1)$-covering.
\end{corollary}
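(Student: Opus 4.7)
The plan is to apply Theorem~\ref{theorem pi_1a} and verify the purely numerical identity that has already been asserted in the sentence immediately preceding the corollary, namely
\[ \left\lceil \frac{1}{2b} \right\rceil = 1 + \left\lceil \frac{2}{r-2} \right\rceil. \]
First I would substitute $a = r^{-1}$ into $b = \frac{1}{2} - a$ to get $b = \frac{r-2}{2r}$, so that $\frac{1}{2b} = \frac{r}{r-2} = 1 + \frac{2}{r-2}$. Since $r > 2$ implies $\frac{2}{r-2} > 0$, the shift by the integer $1$ passes through the ceiling and yields the identity. I would also remark that $r > 2$ automatically forces $0 < a < \frac{1}{2}$, which is the standing hypothesis under which Theorem~\ref{theorem pi_1a} was proved.

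With the identity in hand, let $K = \left\lceil \frac{1}{2b}\right\rceil$ denote the integer appearing in Theorem~\ref{theorem pi_1a}, so that $K = k + 1$ where $k = \left\lceil \frac{2}{r-2}\right\rceil$. Theorem~\ref{theorem pi_1a} states that $p$ is a $(K-1)$-covering but not a $K$-covering, and substituting $K = k + 1$ gives exactly the conclusion that $p$ is a $k$-covering but not a $(k+1)$-covering. The entire argument is a translation of parameters, so there is no substantial obstacle; the only minor point requiring care is the elementary identity $\lceil n + x\rceil = n + \lceil x \rceil$ for integers $n$ and real $x$, used implicitly when moving the $1$ outside the ceiling.
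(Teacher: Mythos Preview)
Your proposal is correct and follows exactly the same route as the paper: the paper simply asserts the identity $\left\lceil \frac{1}{2b} \right\rceil = 1 + \left\lceil \frac{2}{r-2} \right\rceil$ and then invokes Theorem~\ref{theorem pi_1a}, which is precisely what you do, only with the algebra spelled out. One tiny remark: the ceiling identity $\lceil n + x\rceil = n + \lceil x\rceil$ holds for all real $x$, so the positivity of $\frac{2}{r-2}$ is not needed for that step (though it is of course needed to ensure $0 < b$, i.e., that the hypothesis of Theorem~\ref{theorem pi_1a} is met).
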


Now we are ready to prove Theorem~\ref{theorem pi_1}:

\begin{proof}[Proof of Theorem~\ref{theorem pi_1}]
Let $k \ge 2$ be an integer. We first show that $\pi_1^k(\Bor(S^1; a)) \cong \ZZ / 2$ for $k > q$. Let $v = 0 \in V(\Bor(S^1; a))$ and $\tv = (0, 1) \in \tBor(S^1; a)$. Since $0 < a < \frac{1}{2}$, the graph $\Bor(S^1; a)$ is non-bipartite and hence $\pi_1^k(\Bor(S^1, a); v)$ has an odd element. Hence it suffices to show that $\pi_1^k(\Bor(S^1, a); v)_{ev}$ is trivial.

By the assumption on $q$, there is a walk $\delta$ of length $2q$ in $\tBor(S^1; a)$ joining $(0,1)$ to $(1,1)$. Then, $p \circ \delta$ is a closed walk of length $2k$, and hence $[\delta]_i$ is trivial (Lemma~\ref{lemma short walk}).

Let $[\gamma] \in \pi_1^i(\Bor(S^1; a), v)_{ev}$. Since $p \colon \tB_a \to B_a$ is a $1$-covering map, there is a walk $\tgamma$ starting at $\tv$ such that $p \circ \tgamma = \gamma$ (Proposition~\ref{proposition lifting}). Since the length of $\gamma$ is even, there is an integer $m$ such that the terminal point of $\tgamma$ is $(m, 1)$.
If $m \ge 0$, then by Lemma~\ref{lemma simply-connected}, we have $\tgamma \simeq_k \delta \cdot (f \circ \delta) \cdots (f^{m-1} \circ \delta)$. Hence,
\[ \gamma = p \circ \tgamma \simeq_k p \circ \left( \delta \cdot (f \circ \delta) \cdots (f^{m-1} \circ \delta) \right) = (p \circ \delta) \cdot (p \circ \delta) \cdots (p \circ \delta) \simeq_k *.\]
This completes the proof that $\pi_1^k(\Bor(S^1; a), v)_{ev}$ is trivial, and hence $\pi_1^k(\Bor(S^1; a)) \cong \ZZ / 2$ for $k > q$.

Next suppose $2 \le k \le q$. We define the bijection $\psi \colon \pi_1^i(\Bor(S^1 ; r^{-1})) \to \ZZ$ as follows. By Corollary~\ref{corollary covering}, $p \colon \tBor(S^1; r^{-1}) \to \Bor(S^1; r^{-1})$ is an $k$-covering.
Let $\gamma$ be a closed walk in $\Bor(S^1;a)$ starting at $v = 0$.
Let $\tgamma$ be the lift of $\gamma$ with respect to $p \colon \tBor(S^1; a) \to \Bor(S^1; a)$ starting at $\tv = (0,1) \in \tBor(S^1; a)$ (Proposition~\ref{proposition lifting}). Then the terminal point $\tw$ of $\tgamma$ is contained in $p^{-1}(0)$, and hence there is a unique integer $\psi(\gamma)$ such that $\tw = f^{\psi(\gamma)}(0,1)$. By Proposition~\ref{proposition lifting}, this integer $\psi(\gamma)$ depends only on the $k$-homotopy class $[\gamma]_k$ of $\gamma$. This induces a map $\psi \colon \pi_1^k(\Bor(S^1; v)) \to \ZZ$.

This $\psi$ is a group homomorphism. Indeed, let $\gamma$ and $\gamma'$ be closed walks of $(\Bor(S^1 ; a), v)$. Then $\tgamma \cdot (f^{\psi(\gamma)} \circ \tgamma')$ is the lift of $\gamma \cdot \gamma'$ starting at $\tv$, and its terminal point is $f^{\psi(\gamma) + \psi(\gamma')}(v)$. Hence we have $\psi ([\gamma]_i \cdot [\gamma']_k) = \psi([\gamma]_k) + \psi([\gamma']_k)$.

Next we show that $\psi$ is surjective. To see this, for $n \in \ZZ$, let $\tgamma$ be a walk in $\tBor(S^1 ; r^{-1})$ joining $\tv$ to $f^n(\tv)$. Then $n = \psi ([p \circ \tgamma]_k)$, which shows that $\psi$ is surjective.

Finally, we show that $\psi$ is injective. Let $[\gamma]_k \in \Ker(\psi)$. Then $\tgamma$ is a closed walk of $(\tBor(S^1; r^{-1}), 0)$. Since $k \ge 2$, we have $\pi_1^k(\tBor(S^1; r^{-1}))$ is trivial and hence we have $[\gamma]_k = p_*([\tgamma]_i)$ is also trivial. This completes the proof that $\pi_1^k(\Bor(S^1; r^{-1})) \cong \ZZ$ for $k \le q$.
\end{proof}

By combining the discussion in Subsection~\ref{subsection Borsuk graph}, we complete the proof of Theorem~\ref{theorem main}. We end this paper by discussing the \(k\)-fundamental groups of circular complete graphs $K_{n/m}$ (see \cite{PW, Zhu, Zhu2}). Let \(n\) and \(m\) be positive integers. Define the metric \(d\) on \(\ZZ / n\ZZ\) by
\[ d(x,y) = \min \{ i \in \ZZ_{\ge 0} \mid \text{$x = y + i$ or $x = y - i$ in $\ZZ / n \ZZ$} \}.\]
The \emph{circular complete graph \(K_{n/m}\)} is defined by
\[ V(K_{n/m}) = \ZZ/ n, \quad E(K_{n/m}) = \{ \{ x,y\} \mid x,y \in \ZZ / n \ZZ,\ d(x,y) \ge m \}.\]
Although we defined the circular chromatic number $\chi_c(G)$ using the Borsuk graph, it can also be defined using circular complete graphs in a similar way (see \cite{Zhu}).

\begin{proposition}
Let $n$ and $m$ be positive integers. Set $r = \frac{n}{m}$ and suppose that $2 < r < 4$. Set $q = \left\lceil \frac{2}{r - 2} \right\rceil $. Then, for any integer $k \ge 2$, we have
\[ \pi_1^k (K_{n/m}) \cong \begin{cases}
\ZZ & (2 \le k \le q) \\
\ZZ / 2 & (k > q).
\end{cases}\]
\end{proposition}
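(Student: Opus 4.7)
The plan is to run the argument of Theorem~\ref{theorem pi_1} with $\tBor(S^1; r^{-1})$ replaced by a discrete analogue $\tilde{K}_{n/m}$. Define $\tilde{K}_{n/m}$ to be the bipartite graph with vertex set $\ZZ \times \{\pm 1\}$ and edges
\[ E(\tilde{K}_{n/m}) = \big\{ \{(x, 1), (y, -1)\} : y - x \in [m, n-m] \big\}, \]
and let $p \colon \tilde{K}_{n/m} \to K_{n/m}$ be $(x, \varepsilon) \mapsto x \bmod n$. Since $y - x \in [m, n-m]$ forces both $y - x$ and $n - (y - x)$ to lie in $[m, n-m]$, we get $d_{\ZZ/n}(x,y) \ge m$, so $p$ is a graph homomorphism with $p(N(v)) = N(p(v))$ for every vertex $v$ (the analogue of Lemma~\ref{lemma surjective}). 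The translation $(x, \varepsilon) \mapsto (x + n, \varepsilon)$ is a graph automorphism of $\tilde{K}_{n/m}$ generating the infinite cyclic deck group of $p$, playing the role of $f$ in the Borsuk setting.

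First I would establish the analogue of Theorem~\ref{theorem pi_1a}. An inductive calculation gives $N^i((0, 1)) = \{(z, (-1)^i) : z \in [a_i, b_i]\}$ with $b_i - a_i = i(n - 2m)$, so $p|_{N^i((0,1))}$ is injective precisely when $i(n - 2m) + 1 \le n$. A routine identity verifies $\lfloor (n-1)/(n-2m) \rfloor = \lceil 2m/(n-2m) \rceil = k$, and Lemma~\ref{lemma checking criterion} then yields that $p$ is a $k$-covering but not a $(k+1)$-covering.

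The substantive step is the analogue of Lemma~\ref{lemma simply-connected}: $\pi_1^i(\tilde{K}_{n/m}) = 0$ for every $i \ge 2$. I would argue by induction on $(\mathrm{length}(\gamma), M(\gamma))$ in lexicographic order, where $M(\gamma) = \max_j \pr_1(\gamma(j)) - \min_j \pr_1(\gamma(j))$. Pick an index $i_0$ attaining the maximum of $\pr_1 \circ \gamma$; the one-sided edge condition together with maximality forces $\gamma(i_0)$ to have parity $-1$ and its walk-neighbors to have first coordinates in $[M' - (n-m), M' - m]$ with $M' = \pr_1(\gamma(i_0))$. If neither walk-neighbor attains the upper endpoint $M' - m$, a single (B)$_2$-move lowers $\pr_1(\gamma(i_0))$ by one, strictly decreasing $M(\gamma)$; otherwise saturation propagates, the second-nearest vertex on the saturated side is forced by maximality to equal $\gamma(i_0)$, and an (A)-move removes two consecutive vertices. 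Boundary indices are handled by conjugating $\gamma$ by its first or last edge, reducing to an inductively shorter closed walk based at a translated vertex.

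The main obstacle is executing this final step cleanly: the discrete setting introduces saturation phenomena at the extremal endpoints with no continuous counterpart in the proof of Lemma~\ref{lemma simply-connected}. Once the vanishing of $\pi_1^i(\tilde{K}_{n/m})$ is established, the rest is a direct transcription of the proof of Theorem~\ref{theorem pi_1}. For $2 \le i \le k$, the map $\psi \colon \pi_1^i(K_{n/m}) \to \ZZ$ sending a class to the deck translation of its lift is a well-defined group isomorphism, and for $i > k$ a walk of length $2(k+1)$ in $\tilde{K}_{n/m}$ joining $(0, 1)$ to $(n, 1)$ (which exists because $(k+1)(n-2m) \ge n$) combined with Lemma~\ref{lemma short walk} applied with $i' = k + 1 \le i$ collapses $\pi_1^i(K_{n/m})$ onto its parity quotient $\ZZ/2$.
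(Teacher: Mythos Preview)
The paper takes a much shorter route: rather than rebuilding a discrete cover, it exhibits $K_{n/m}$ as a retract of $\Bor(S^1;r^{-1})$ via the graph homomorphisms $f(x\bmod n)=\tfrac{x}{n}\bmod 1$ and $g(x\bmod 1)=\lceil nx\rceil\bmod n$, which satisfy $gf=\mathrm{id}_{K_{n/m}}$. Then $f_*$ embeds $\pi_1^i(K_{n/m})$ as a retract of $\pi_1^i(\Bor(S^1;r^{-1}))$, and this retract is nontrivial because $K_{n/m}$ is non-bipartite; but the only nontrivial retract of $\ZZ$ or of $\ZZ/2$ is the whole group, so $f_*$ is an isomorphism and Theorem~\ref{theorem pi_1} finishes. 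Your self-contained approach avoids the dependence on Theorem~\ref{theorem pi_1} but is considerably longer.

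More importantly, your identification of the deck group is wrong, and this breaks the definition of $\psi$ in the range $2\le i\le k$. The translation $T\colon(x,\varepsilon)\mapsto(x+n,\varepsilon)$ is a deck transformation of $p$, but it does \emph{not} generate the deck group: the fiber $p^{-1}(0)=\{(\ell n,\pm 1):\ell\in\ZZ\}$ splits into two $T$-orbits according to the sign $\varepsilon$. Since your $\tilde K_{n/m}$ is bipartite with parts $\ZZ\times\{1\}$ and $\ZZ\times\{-1\}$, the lift of any \emph{odd} closed walk starting at $(0,1)$ terminates at some $(\ell n,-1)$, which is not a $T$-translate of $(0,1)$; hence ``the deck translation of its lift'' is undefined on odd classes. (For instance, with $n=5$, $m=2$ the lift of the $5$-cycle $0,2,4,1,3,0$ ends at $(0,-1)$, while the lift of its square ends at $(-5,1)$.) Your argument therefore yields only $\pi_1^i(K_{n/m})_{ev}\cong\ZZ$, which by itself does not exclude $\pi_1^i(K_{n/m})\cong\ZZ\times\ZZ/2$ or the infinite dihedral group. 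The fix is to use the genuine analogue of the paper's $f$ (which flips the sign): set $\tau(x,1)=(x+n,-1)$ and $\tau(x,-1)=(x,1)$. One checks directly that $\tau$ is a graph automorphism with $p\circ\tau=p$ and $\tau^2=T$, and that $\langle\tau\rangle$ acts simply transitively on $p^{-1}(0)$. With $\tau$ in place of $T$ your map $\psi$ is well defined on all of $\pi_1^i(K_{n/m})$, and the remainder of your outline goes through.
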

\begin{proof}
By Theorem~\ref{theorem pi_1}, it suffices to show that $\pi_1^k(K_{n/m}) \cong \pi_1^k(\Bor(S^1; r^{-1}))$. Define a graph homomorphism \( f \colon K_{n/m} \to \Bor(S^1; r^{-1}) \) by
\[
f \big(x \hspace{-2mm} \mod n \big) = \frac{x}{n} \hspace{-1mm}\mod 1
\]
for $x \in \ZZ$, and define a graph homomorphism \( g \colon \Bor(S^1 ; r^{-1}) \to K_{n/m} \) by
\[
g\big( x \hspace{-2mm}\mod 1 \big) = \lceil nx \rceil \hspace{-1mm}\mod n
\]
for $x \in \RR$. Then we have \( gf = \mathrm{id}_{K_{n/m}} \). Hence, \(\pi_1^k(K_{n/m})\) is a retract of \(\pi_1^k(\Bor(S^1 ; r^{-1}))\), and since \(K_{n/m}\) is non-bipartite, \(\pi_1^k(K_{n/m})\) is not a trivial group. From Theorem~\ref{theorem pi_1}, we know that \(\pi_1^k(\Bor(S^1 ; r^{-1}))\) is isomorphic to $\ZZ$ or $\ZZ / 2$, so in these cases, \(f_* \colon \pi_1^k(K_{n/m}) \to \pi_1^k(\Bor(S^1 ; r^{-1}))\) is an isomorphism.
\end{proof}

\section{Application to the graph coloring problem of Cayley graphs}

In this section, we discuss the relationship between our results and the recent work of Krebs and Sankar \cite{KrebsSankar}. Let $\Gamma$ be a group. A subset $S$ of $\Gamma$ is said to be \emph{symmetric} if $x \in S$ implies $x^{-1} \in S$. The Cayley graph $\Cay(\Gamma, S)$ with symmetric set $S$ is defined to be the graph whose vertex set is $\Gamma$ and whose edge set is $\{ \{x,y\} \mid x^{-1} y \in S \}$.

The graph coloring problem of Cayley graphs has been extensively studied (see \cite{Alon, Czerwinski} for example). Recently, Krebs and Sankar \cite{KS} showed the following striking theorem:

\begin{theorem}[{\cite[Theorem~1.2]{KrebsSankar}}] \label{theorem KS}
Let $\Gamma$ be an abelian group. Then, there exists a symmetric subset $S$ such that $\chi(\Cay(\Gamma, S)) = 3$ if and only if $\exp(\Gamma) \not\in \{ 1,2,4\}$.
\end{theorem}

Here, recall that the exponent $\exp(\Gamma)$ of a group $\Gamma$ is the smallest positive integer $n$ such that $g^n = e_\Gamma$ for every $g \in \Gamma$.

Let $\Gamma_0$ be the subgroup of $\Gamma$ generated by $S$. Then $\Cay(\Gamma, S)$ is a disjoint union of copies of $\Cay(\Gamma_0, S)$. Krebs and Sankar in fact showed that $\pi_1^2(\Cay(\Gamma_0, S))$ is a torsion group (see Proof of Theorem~1.2 of \cite{KrebsSankar}). Thus, by Theorem~\ref{theorem circular 1}, we have the following refinement of one direction of Theorem~\ref{theorem KS}.


\begin{theorem} \label{theorem refinement}
Let $\Gamma$ be an abelian group. If $\exp(\Gamma) \in \{ 1,2,4\}$ and $\Cay(\Gamma, S)$ is non-bipartite, then $\chi_c(\Cay(\Gamma, S)) \ge 4$.
\end{theorem}

\section*{Acknowledgement}
The first author and the second author were supported in part by JSPS KAKENHI Grant Numbers JP23K13006 and JP23K12975, respectively. The authors are grateful to Yuta Nozaki and Kenta Ozeki for discussions regarding their recent work \cite{Kaiseretal}. The authors are grateful to the anonymous referees for their insightful and helpful comments. In particular, one of the referees kindly brought the recent work \cite{KrebsSankar} of Krebs and Sankar to our attention.

\section*{Data availability}
No data was used for the research described in the article.

\section*{Declaration of generative AI and AI-assisted technologies in the manuscript preparation process}

During the preparation of this work the authors used Gemini in order to improve the readability of the manuscript. After using these tools, the authors reviewed and edited the content as needed and takes full responsibility for the content of the published article.

\bibliographystyle{abbrv}

\end{document}